\numberwithin{equation}{section}
\theoremstyle{plain}
\newtheorem{theorem}{Theorem}[section]
\newtheorem{lemma}[theorem]{Lemma}
\newtheorem{proposition}[theorem]{Proposition}
\newtheorem{corollary}[theorem]{Corollary}
\theoremstyle{definition}
\newtheorem{definition}[theorem]{Definition}
\newtheorem{example}[theorem]{Example}
\newtheorem{remark}[theorem]{Remark}
\begin{document}

\title[Lie Derivations of Dual Extension Algebras]
{Lie Derivations of Dual Extensions}

\author{Yanbo Li and Feng Wei}

\address{Li: School of Mathematics and Statistics, Northeastern
University at Qinhuangdao, Qinhuangdao, 066004, P.R. China}

\email{liyanbo707@163.com}

\address{Wei: School of Mathematics, Beijing Institute of
Technology, Beijing, 100081, P. R. China}

\email{daoshuo@hotmail.com}

\begin{abstract}
Let $K$ be a field and $\Gamma$ a finite quiver without oriented
cycles. Let $\Lambda$ be the path algebra $K(\Gamma, \rho)$ and
let $\mathscr{D}(\Lambda)$ be the dual extension of $\Lambda$. In
this paper, we prove that each Lie derivation of
$\mathscr{D}(\Lambda)$ is of the standard form.
\end{abstract}

\subjclass[2000]{15A78, 16W25, 47C05}

\keywords{Dual extension, generalized matrix algebra, Lie
derivation, }

\thanks{The first author of this work is supported by Fundamental Research
Funds for the Central Universities (N110423007).}

\maketitle

\section{Introduction}\label{xxsec1}

In the study of the representation theory of quasi-hereditary
algebras, Xi \cite{Xi1} defined dual extensions of algebras without
oriented cycles. Roughly speaking, these algebras $A$ are
constructed by adding to the ordinary quiver (without oriented
cycles) of a given algebra $B$ a reverse arrow for any original
arrow and extending the relations in a suitable way to this extended
quiver. They are a class of finite dimensional quasi-hereditary
algebras and they were detailedly investigated in \cite{DengXi1,
DengXi3, Xi2} by Deng and Xi. A dual extension algebra is a
BGG-algebra in the sense of R. Irving \cite{Irving}, that is, a
quasi-hereditary algebra with a duality which fixes all simple
modules. A much common more general construction, the twisted
doubles, were studied in \cite{DengXi2, KoenigXi, Xi3} by Deng,
Koenig and Xi.

Derivations and Lie derivations of associative algebras, as
classical linear mappings, play significant roles in various
mathematical areas, such as in Lie theory, matrix theory,
noncommutative algebras and operator algebras. Let $\mathcal{R}$ be
a commutative ring with identity, $\mathcal{A}$ be a unital algebra
over $\mathcal{R}$ and $\mathcal{Z(A)}$ be the center of
$\mathcal{A}$. We write $[a, b]=ab-ba$ for all $a,b\in \mathcal{A}$.
Let $\Theta\colon \mathcal{A}\longrightarrow \mathcal{A}$ be a
linear mapping. We call $\Theta$ an \textit{{\rm (}associative{\rm
)} derivation} if
$$
\Theta(ab)=\Theta(a)b+a\Theta(b)
$$
for all $a, b\in \mathcal{A}$. Further, $\Theta$ is called a
\textit{Lie derivation} if
$$
\Theta([a, b])=[\Theta(a), b]+[a, \Theta(b)]
$$
for all $a, b\in \mathcal{A}$. It is clear that every associative
derivation is a Lie derivation. But, the converse statement is not
true in general. Moreover, if $D\colon \mathcal{A}\longrightarrow
\mathcal{A}$ is an associative derivation and $\Delta\colon
\mathcal{A}\longrightarrow \mathcal{Z(A)}$ is a linear mapping such
that $\Delta([a, b])=0$ for all $a, b\in \mathcal{A}$, then the
mapping
$$
\Theta=D+\Delta, \eqno(\spadesuit)
$$ is a Lie derivation. We shall say that a
Lie derivation is \textit{standard} in the case where it can be
expressed in the preceding form. We call $\Theta$ a \textit{Jordan
derivation} if
$$
\Theta(a\circ b)=\Theta(a)\circ b+a\circ \Theta(b)
$$
for all $a, b\in \mathcal{A}$. Of course every associative
derivation is a Jordan derivation, while the converse statement is
not always true. We shall say that a Jordan derivation is
\textit{standard} if it is an associative derivation.

A common and popular problem in the study of Lie derivations is
whether they have the above mentioned standard form. Equivalently,
how every Lie derivation is approximate to a derivation to the
utmost extent. The first result in this aspect is due to Martindale,
who proved that each Lie derivation of a prime ring satisfying some
conditions is of the standard form in \cite{Martindale}. Alaminos et
al \cite{AlaminosBresarVillena} showed that every Lie derivation on
the full matrix algebra over a field of characteristic zero has the
standard form. Cheung \cite{Cheung} considered Lie derivations of
triangular algebras and gave a sufficient and necessary condition
which enables every Lie derivation to be standard. Benkovic studied
the structure of Lie derivations from a triangular algebra into its
bi-module in \cite{Benkovic}. The description of standard form on
Lie triple derivations of triangular algebras were obtained by Xiao
and Wei in \cite{XiaoWei2}. Recently, the current authors and Xiao
investigated the associative-type, Lie-type and Jordan-type linear
mappings of generalized matrix algebras. For details, we refer the
reader to \cite{LiWei1, LiWei2, LiWykWei, XiaoWei1}.

Path algebras of quivers naturally appear in the study of tensor
algebras of bimodules over semisimple algebras. It is well known
that any finite dimensional basic $K$-algebra is given by a quiver
with relations when $K$ is an algebraically closed field. In
\cite{GuoLi}, Guo and Li studied the Lie algebra of differential
operators on a path algebra $K\Gamma$ and related this Lie algebra
to the algebraic and combinatorial properties of the path algebra
$K\Gamma$. In \cite{LiWei2}, the current authors studied Lie
derivations of a class of path algebras of quivers without oriented
cycles, which can be viewed as one-point extensions. It was proved
that in this case each Lie derivation is of the standard form.
Moreover, the standard decomposition is unique. On the other hand,
we remark that its dual extension algebra of arbitrary finite
dimensional algebra inherit many wonderful properties from the given
algebra. Then for the path algebra of a finite quiver without
oriented cycles, it is natural to ask whether all Lie derivations on
the dual extension algebra are of the standard form. We will give a
positive answer in this paper. More
precisely, the main result of this paper is\\

\noindent{\bf Theorem.} \,\,{\em Let $K$ be a field of
characteristic not $2$. Let $(\Gamma, \rho)$ be a finite quiver
without oriented cycles. Then each Lie derivation on the dual
extension of the path algebra $K(\Gamma,\rho)$ is of the standard
form $(\spadesuit)$.
Moreover, the standard decomposition is unique.}\\

Jordan derivations, another important class of linear mappings on
dual extension algebras, has been characterized in \cite{LiWei3},
where we show that every Jordan derivation on dual extension
algebras is also of the standard form.

Note that each associative algebra with non trivial idempotents is
isomorphic a generalized matrix algebra. Recently, Du and Wang
\cite{DuWang} studied Lie derivations of generalized matrix algebras
with bi-modules $M$ being faithful. Although the methods of matrix
algebras is also employed in our current work, we prefer to the
assumptions without faithful conditions. When Cheung \cite{Cheung}
investigated Lie derivations of triangular algebras, the faithful
assumption is not needed. In this sense, Section $3$ of this paper
is a natural generalization of Cheung's work. Simultaneously, our
work is an attempt to deal with path algebras of quivers with
oriented cycles. So this article is also a continuation and
development of \cite{LiWei2}.

The paper is organized as follows. After a rapid review of some
needed preliminaries in Section $2$, we characterize Lie derivations
of generalized matrix algebras in Section $3$. We study Lie
derivations of dual extensions in Section $4$, where the main result
of this paper will be eventually obtained.

\section{Dual extension}\label{xxsec2}

Let us first recall the definition of dual extensions of path
algebras which were introduced by Xi \cite{Xi1}. Moreover, in order
to use the methods of matrix algebras, we will also give some
descriptions of dual extensions from the point of view of
generalized matrix algebras. This kind of algebra was introduced by
Morita in \cite{Morita}, where the author studied Morita duality
theory of modules and its applications to Artinian algebras. Let us
begin with the definition of generalized matrix algebras.

\subsection{Generalized matrix algebras}\label{xxsec2.1}

The definition of generalized matrix algebras is given by a Morita
context. Let $\mathcal{R}$ be a commutative ring with identity. A
Morita context consists of two $\mathcal{R}$-algebras $A$ and $B$,
two bimodules $_AM_B$ and $_BN_A$, and two bimodule homomorphisms
called the pairings $\Phi_{MN}: M\underset {B}{\otimes}
N\longrightarrow A$ and $\Psi_{NM}: N\underset {A}{\otimes}
M\longrightarrow B$ satisfying the following commutative diagrams:
$$
\xymatrix{ M \underset {B}{\otimes} N \underset{A}{\otimes} M
\ar[rr]^{\hspace{8pt}\Phi_{MN} \otimes I_M} \ar[dd]^{I_M \otimes
\Psi_{NM}} && A
\underset{A}{\otimes} M \ar[dd]^{\cong} \\  &&\\
M \underset{B}{\otimes} B \ar[rr]^{\hspace{10pt}\cong} && M }
\hspace{4pt}$$ and
$$\xymatrix{ N \underset
{A}{\otimes} M \underset{B}{\otimes} N
\ar[rr]^{\hspace{8pt}\Psi_{NM}\otimes I_N} \ar[dd]^{I_N\otimes
\Phi_{MN}} && B
\underset{B}{\otimes} N \ar[dd]^{\cong}\\  &&\\
N \underset{A}{\otimes} A \ar[rr]^{\hspace{10pt}\cong} && N
\hspace{2pt}.}
$$
Let us write this Morita context as $(A, B, _AM_B, _BN_A, \Phi_{MN},
\Psi_{NM})$. If $(A, B, _AM_B,$ $ _BN_A,$ $ \Phi_{MN}, \Psi_{NM})$
is a Morita context, then the set
$$
\left[
\begin{array}
[c]{cc}%
A & M\\
N & B\\
\end{array}
\right]=\left\{ \left[
\begin{array}
[c]{cc}%
a& m\\
n & b\\
\end{array}
\right] \vline a\in A, m\in M, n\in N, b\in B \right\}
$$
form an $\mathcal{R}$-algebra under matrix-like addition and
matrix-like multiplication. There is no constraint condition
concerning the bimodules $M$ and $N$. Of course, they probably equal
to zeros. Such an $\mathcal{R}$-algebra is called a
\textit{generalized matrix algebra} of order 2 and is usually
denoted by $\mathcal{G}=\left[
\begin{array}
[c]{cc}%
A & M\\
N & B
\end{array}
\right]$. Its center $\mathcal {Z}(\mathcal{G})$ is
$$
\mathcal {Z}(\mathcal{G})=\left\{ \left[
\begin{array}
[c]{cc}%
a & 0\\
0 & b
\end{array}
\right] \vline \hspace{2pt}a\in \mathcal {Z}(A), b\in \mathcal
{Z}(B), am=mb,\,\,na=bn, \ \forall\ m\in M,\,\, n\in N \right\}.
$$
Thus we have two natural $\mathcal{R}$-linear projections
$\pi_A:\mathcal{G}\rightarrow A$ and $\pi_B:\mathcal{G}\rightarrow
B$ by
$$
\pi_A: \left[
\begin{array}
[c]{cc}%
a & m\\
n & b\\
\end{array}
\right] \longmapsto a \quad \text{and} \quad \pi_B: \left[
\begin{array}
[c]{cc}%
a & m\\
n & b\\
\end{array}
\right] \longmapsto b.
$$
Then $\pi_A \left(\mathcal{Z(G)}\right)$ is a subalgebra of
$\mathcal{Z}(A)$ and that $\pi_B\left(\mathcal{Z(G)}\right)$ is a
subalgebra of $\mathcal{Z}(B)$. If $M$ is faithful as a right
$B$-module and as left $A$-module, then for every element
$a\in\pi_A(\mathcal{Z(G)})$, there exists a unique
$b\in\pi_B(\mathcal{Z(G)})$, which is denoted by $\varphi(a)$,
such that $
\left[\smallmatrix a & 0\\
0 & b
\endsmallmatrix \right] \in \mathcal{Z(G)}$. It is easy to
verify that the mapping
$\varphi:\pi_A(\mathcal{Z(G)})\longrightarrow \pi_B(\mathcal{Z(G)})$
is an algebraic isomorphism such that $am=m\varphi(a)$ and
$na=\varphi(a)n$ for all $a\in \pi_A(\mathcal{Z(G)}), m\in M, n\in
N$.

\begin{remark}\label{2.1}
Any unital $\mathcal{R}$-algebra $\mathcal {A}$ with nontrivial
idempotents is isomorphic to a generalized matrix algebra. In fact,
suppose that there exists a nontrivial idempotent $e\in \mathcal
{A}$. We construct the following \textit{natural generalized matrix
algebra}:
\begin{align*}
\mathcal{G} & = \left[
\begin{array}
[c]{cc}%
e\mathcal {A}e & e\mathcal {A}(1-e)\\
(1-e)\mathcal {A}e & (1-e)\mathcal {A}(1-e)\\
\end{array}
\right]\\ & =\left\{ \hspace{2pt} \left[
\begin{array}
[c]{cc}%
eae & ec(1-e)\\
(1-e)de & (1-e)b(1-e)\\
\end{array}
\right] \hspace{3pt} \vline \hspace{3pt} a, b, c, d\in \mathcal
{A}\hspace{3pt} \right\} .
\end{align*}
It is easy to check that the $\mathcal{R}$-linear mapping
\begin{align*}
\xi: \mathcal {A} & \longrightarrow \mathcal{G} \\
a & \longmapsto \left[
\begin{array}
[c]{cc}%
eae & ea(1-e)\\
(1-e)ae & (1-e)a(1-e)\\
\end{array}
\right]
\end{align*}
is an isomorphism from $\mathcal {A}$ to $\mathcal{G}$.
\end{remark}

\subsection{Dual extension of path algebras}
Recall that a \textit{finite quiver} $\Gamma$ is an oriented graph
with the set of vertices $\Gamma_0$ and the set of arrows between
vertices $\Gamma_1$ being both finite. For an arrow $\alpha$, we
write $s(\alpha)=i$ and $e(\alpha)=j$ if it is from the vertex $i$
to the vertex $j$. A \textit{sink} is a vertex without arrows
beginning at it and a \textit{source} is a vertex without arrows
ending at it. A \textit{nontrivial path} in $\Gamma$ is an ordered
sequence of arrows $p=\alpha_n\cdots\alpha_1$ such that
$e(\alpha_m)=s(\alpha_{m+1})$ for each $1\leq m<n$. Define
$s(p)=s(\alpha_1)$ and $e(p)=e(\alpha_n)$. A \textit{trivial path}
is the symbol $e_i$ for each $i\in \Gamma_0$. In this case, we set
$s(e_i)=e(e_i)=i$. A nontrivial path $p$ is called an
\textit{oriented cycle} if $s(p)=e(p)$. Denote the set of all paths
by $\mathscr{P}$.

Let $K$ be a field and $\Gamma$ be a quiver. Then the path algebra
$K\Gamma$ is the $K$-algebra generated by the paths in $\Gamma$
and the product of two paths $x=\alpha_n\cdots\alpha_1$ and
$y=\beta_t\cdots\beta_1$ is defined by
$$
xy=\left\{
\begin{array}{ll}
\alpha_n\cdots\alpha_1\beta_t\cdots\beta_1, & \mbox{$e(y)=s(x)$};\\
0, & \mbox{otherwise}.
\end{array}
\right.
$$
Clearly, $K\Gamma$ is an associative algebra with the identity
$1=\sum_{i\in \Gamma_0}e_i$, where $e_i(i\in \Gamma_0)$ are
pairwise orthogonal primitive idempotents of $K\Gamma$.

A {\em relation} $\sigma$ on a quiver $\Gamma$ over a field $K$ is
a $K$-linear combination of paths $\sigma=\sum_{i=1}^nk_ip_i,$
where $k_i\in K$ and
$$e(p_1)=\cdots=e(p_n),\,\,\, s(p_1)=\cdots=s(p_n).$$ Moreover, the
number of arrows in each path is assumed to be at least 2. Let
$\rho$ be a set of relations on $\Gamma$ over $K$. The pair
$(\Gamma, \rho)$ is called a \textit{quiver} with relations over
$K$. Denote by $<\rho>$ the ideal of $K\Gamma$ generated by the set
of relations $\rho$. The $K$-algebra $K(\Gamma,
\rho)=K\Gamma/<\rho>$ is always associated with $(\Gamma, \rho)$.
For arbitrary element $x\in K\Gamma$, write by $\overline x$ the
corresponding element in $K(\Gamma, \rho)$. We often write
$\overline x$ as $x$ if this is not misled or confused. We refer the
reader to \cite{AuslanderReitenSmalso} for the basic facts of path
algebras.

Let $\Lambda=K(\Gamma, \rho)$, where $\Gamma$ is a finite quiver.
Let $\Gamma^{\ast}$ to be a quiver whose vertex set is $\Gamma_0$
and
$$
\Gamma_1^{\ast}=\{\alpha^{\ast}: i\rightarrow j\mid \alpha: j\rightarrow i
\,\,\,\text{is an arrow in} \,\,\,\Gamma_1\}.
$$ Let
$p=\alpha_n\cdots\alpha_1$ is a path in $\Gamma$. Write the path
$\alpha_1^{\ast}\cdots\alpha_n^{\ast}$ in $\Gamma^{\ast}$ by
$p^{\ast}$. Define $\mathscr{D}(\Lambda)$ to be the path algebra of
the quiver $(\Gamma_0, \Gamma_1\cup\Gamma_1^{\ast})$ with relations
$$
\rho\,\,\cup\,\,\rho^{\ast}\,\,\cup\,\,\{\alpha\beta^{\ast}\mid\alpha,\beta\in\Gamma_1\}.
$$
If $\Gamma$ has no oriented cycles, then $\mathscr{D}(\Lambda)$ is
called the {\em dual extension} of $\Lambda$. It is a BGG-algebra in
the sense of \cite{Irving}. Clearly, if $|\Gamma_0|=1$, then the
algebra is trivial. Let us assume that $|\Gamma_0|\geq 2$ from now
on. It is helpful to point out that in this case,
$\mathscr{D}(\Lambda)$ has non-trivial idempotents. In view of
Remark \ref{2.1}, $\mathscr{D}(\Lambda)$ is isomorphic to a
generalized matrix algebra
$\mathcal{G}=\left[\smallmatrix A & M\\
N & B \endsmallmatrix \right]$.

Let us take the nontrivial idempotent to be $e_i$, where $i$ is a
source of $\Gamma$. According to the definition of dual extension,
it is easy to verify that the pairings $\Phi_{MN}=0$ and
$\Psi_{NM}\neq 0$. If $M\neq 0$, then $N\neq 0$. Moreover, it is
helpful to point out that $M$ need not to be faithful as left
$A$-module or as right $B$-module. Let us illustrate two examples in
below.

\begin{example}
Let $\Gamma$ be a quiver as follows
$$\xymatrix@C=25mm{
  \bullet
  \ar@<0pt>[r]_(0){1}^{\alpha}  &
  \bullet &
  \bullet
  \ar@<0pt>@[r][l]_(0.5){\beta}^(0){3}^(1){2} }$$
and let $\Lambda=K\Gamma$. The dual extension
$\mathscr{D}(\Lambda)$ has a basis $$\{e_1, e_2, e_3, \alpha,
\beta, \alpha^{\ast}, \beta^{\ast}, \alpha^{\ast}\alpha,
\beta^{\ast}\alpha, \beta^{\ast}\beta, \alpha^{\ast}\beta\}.$$
Taking the nontrivial idempotent to be $e_1$, then
$\mathscr{D}(\Lambda)$ is isomorphic to the generalized matrix
algebra $\mathcal {G}=\left[
\begin{array}
[c]{cc}%
A & M\\
N & B\\
\end{array}
\right]$, where $A$ has a basis $\{e_2, e_3, \beta, \beta^{\ast},
\beta^{\ast}\beta\}$, $B$ has a basis $\{e_1, \alpha^{\ast}\alpha
\}$, $M$ has a basis $\{\alpha, \beta^{\ast}\alpha\}$ and $N$ has a
basis $\{\alpha^{\ast}, \alpha^{\ast}\beta\}$. It follows from
$\beta\alpha=0$ and $\beta\beta^{\ast}\alpha=0$ that $\beta\in {\rm
anni}(_AM)$. That is, $M$ is not faithful as left $A$-module. It is
easy to check that $\alpha^{\ast}\alpha\in {\rm anni}(M_B)$. This
implies that $M$ is not faithful as right $B$-module. Similarly, we
obtain $\alpha^{\ast}\alpha\in {\rm anni}(_BN)$ and
$\beta^{\ast}\beta\in {\rm anni}(N_A)$. That is, $N$ is neither a
faithful left $B$-module nor a faithful right $A$-module.
\end{example}

\begin{example}
Let $\Gamma$ be a quiver as follows

$$\xymatrix@C=25mm{
  \bullet \ar@/^/[drr]^{\gamma}
  \ar@{>}[dr]_(0){1}_{\alpha}\\ &
  \bullet\ar[r]_(0){2}_(1){3}_{\beta} & \bullet
  }$$
and let $\Lambda=K\Gamma$. Taking the nontrivial idempotent to be
$e_1$, then the dual extension $\mathscr{D}(\Lambda)\simeq \mathcal
{G}(A, M, N, B)$, where $A$ has a basis $\{e_2, e_3, \beta,
\beta^{\ast},\beta^{\ast}\beta\}$, $M$ has a basis $\{\alpha,
\beta^{\ast}\beta\alpha, \beta^{\ast}\gamma, \beta\alpha, \gamma\}$,
$N$ has a basis $\{\alpha^{\ast}, \gamma^{\ast}\beta,
\alpha^{\ast}\beta^{\ast}\beta, \alpha^{\ast}\beta^{\ast},
\gamma^{\ast}\}$, $B$ has a basis $\{e_1, \alpha^{\ast}\alpha,
\gamma^{\ast}\gamma, \alpha^{\ast}\beta^{\ast}\gamma,
\gamma^{\ast}\beta\alpha, \alpha^{\ast}\beta^{\ast}\beta\alpha\}$.
Clearly, $e_2\alpha\neq 0$, $e_3\beta\alpha\neq 0$. Then $e_2,
e_3\notin {\rm anni}(M)$. Similarly, $\beta\alpha\neq 0$ implies
that $\beta\notin {\rm anni}(M)$; $\beta^{\ast}\beta\alpha\neq 0$
implies that $\beta^{\ast}\notin {\rm anni}(M)$ and
$\beta^{\ast}\beta\notin {\rm anni}(M)$. Hence $M$ is faithful as a
left $A$-module. On the other hand, it is easy to check that
$\alpha^{\ast}\beta^{\ast}\beta\alpha\in {\rm anni}(M_B)$. Thus $M$
is not faithful as a right $B$-module. Similarly, we know that $N$
is faithful as a right $A$-module, while it is not faithful as a
left $B$-module.
\end{example}

\begin{remark}
In order to study global dimension of dual extensions, one more
general definition of dual extension algebras was posed by Xi
\cite{Xi2}. We omit the details here because it will not be used in
our current work.
\end{remark}

\section{Lie derivations of generalized matrix algebras}\label{xxsec3}

In Section $2$ we have pointed out that the dual extension of a path
algebra can be viewed as a generalized matrix algebra. In order to
study Lie derivations of dual extension algebras, it is necessary to
provide some basic facts concerning Lie derivations of generalized
matrix algebras. In this section, we will give a sufficient and
necessary condition which enable every Lie derivation to be standard
$(\spadesuit)$.

From now on, we always assume that all algebras and bimodules are
$2$-torsion free. Note that the forms of derivations and Lie
derivations of a generalized matrix algebra have already been
described in \cite{LiWei1}.

\begin{lemma}\cite[Proposition 4.1]{LiWei1}\label{3.1}
Let $\Theta_{\rm Lied}$ be a Lie derivation of a generalized matrix
algebra $\mathcal{G}=\left[
\begin{array}
[c]{cc}%
A & M\\
N & B\\
\end{array}
\right]$. Then $\Theta_{\rm Lied}$ is of the form
$$
\begin{aligned}
& \Theta_{\rm Lied}\left(\left[
\begin{array}
[c]{cc}%
a & m\\
n & b\\
\end{array}
\right]\right) \\
=& \left[
\begin{array}
[c]{cc}%
\delta_1(a)-mn_0-m_0n+\delta_4(b) & am_0-m_0b+\tau_2(m)\\
n_0a-bn_0+\nu_3(n) & \mu_1(a)+n_0m+nm_0+\mu_4(b)\\
\end{array}
\right] ,\\
& \forall \left[
\begin{array}
[c]{cc}%
a & m\\
n & b\\
\end{array}
\right]\in \mathcal{G},
\end{aligned}
$$
where $m_0\in M, n_0\in N$ and
$$
\begin{aligned} \delta_1:& A \longrightarrow A, &
\delta_4: & B\longrightarrow \mathcal{Z}(A) &  \tau_2:
& M\longrightarrow M, \\
\nu_3: & N\longrightarrow N & \mu_1: & A\longrightarrow
\mathcal{Z}(B), & \mu_4: & B\longrightarrow B
\end{aligned}
$$
are all $\mathcal{R}$-linear mappings satisfying the following
conditions:
\begin{enumerate}
\item[{\rm(1)}] $\delta_1$ is a Lie derivation of $A$ and
$\delta_1(mn)=\delta_4(nm)+\tau_2(m)n+m\nu_3(n);$

\item[{\rm(2)}] $\mu_4$ is a Lie derivation of $B$ and
$\mu_4(nm)=\mu_1(mn)+n\tau_2(m)+\nu_3(n)m;$

\item[{\rm(3)}] $\delta_4([b,b'])=0$ for all $b, b^\prime\in B$ and
$\mu_1([a,a'])=0$ for all $a, a^\prime\in A;$

\item[{\rm(4)}] $\tau_2(am)=a\tau_{2}(m)+\delta_1(a)m-m\mu_1(a)$ and
$\tau_2(mb)=\tau_2(m)b+m\mu_4(b)-\delta_4(b)m;$

\item[{\rm(5)}] $\nu_3(na)=\nu_3(n)a+n\delta_1(a)-\mu_1(a)n$ and
$\nu_3(bn)=b\nu_3(n)+\mu_4(b)n-n\delta_4(b).$
\end{enumerate}
\end{lemma}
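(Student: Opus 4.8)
The plan is to run a Peirce decomposition argument with respect to the standard orthogonal idempotents $e_1=\left[\smallmatrix 1_A & 0 \\ 0 & 0 \endsmallmatrix\right]$ and $e_2=\left[\smallmatrix 0 & 0 \\ 0 & 1_B \endsmallmatrix\right]$, so that under the identification $\mathcal{G}=e_1\mathcal{G}e_1\oplus e_1\mathcal{G}e_2\oplus e_2\mathcal{G}e_1\oplus e_2\mathcal{G}e_2=A\oplus M\oplus N\oplus B$ the asserted formula is read off corner by corner. Abbreviating $\Theta=\Theta_{\rm Lied}$, I would use only the defining identity $\Theta([x,y])=[\Theta(x),y]+[x,\Theta(y)]$, applied to a short list of elementary brackets. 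Two preliminary observations: from $[1,x]=0$ for all $x$ one gets $[\Theta(1),x]=0$, so $\Theta(1)\in\mathcal{Z}(\mathcal{G})$; and writing $\Theta(e_1)=\left[\smallmatrix a_0 & m_0 \\ n_0 & b_0 \endsmallmatrix\right]$ one simply \emph{defines} $m_0\in M$ and $n_0\in N$ to be the off-diagonal entries of $\Theta(e_1)$. (Equivalently, since inner derivations are both associative and Lie derivations, one may subtract the inner derivation $x\mapsto[T_0,x]$ with $T_0=\left[\smallmatrix 0 & m_0 \\ -n_0 & 0 \endsmallmatrix\right]$, thereby reducing to the case where $\Theta(e_1)$ is diagonal; this inner piece accounts exactly for the $m_0,n_0$-terms in the final formula. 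I would instead keep $m_0,n_0$ explicit.)

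The core step is to pin down $\Theta$ on each of the four Peirce summands by applying the Lie identity to the brackets $[e_1,m]=m$ for $m\in M$, $[e_1,n]=-n$ for $n\in N$, $[e_1,a]=0$ for $a\in A$, and $[e_1,b]=0$ for $b\in B$, then comparing all four Peirce entries of each resulting equation. This yields
\[
\Theta(m)=\left[\smallmatrix -mn_0 & \tau_2(m) \\ 0 & n_0m \endsmallmatrix\right],\qquad
\Theta(n)=\left[\smallmatrix -m_0n & 0 \\ \nu_3(n) & nm_0 \endsmallmatrix\right],
\]
\[
\Theta(a)=\left[\smallmatrix \delta_1(a) & am_0 \\ n_0a & \mu_1(a) \endsmallmatrix\right],\qquad
\Theta(b)=\left[\smallmatrix \delta_4(b) & -m_0b \\ -bn_0 & \mu_4(b) \endsmallmatrix\right],
\]
where $\delta_1,\tau_2,\nu_3,\mu_1,\delta_4,\mu_4$ are defined as the relevant Peirce components of $\Theta$ and are therefore automatically $\mathcal{R}$-linear. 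The same computations also force $a_0\in\mathcal{Z}(A)$, $b_0\in\mathcal{Z}(B)$ together with $a_0m=mb_0$ and $b_0n=na_0$ (which reappear below as the $a=1_A$ cases of the first identities in (4) and (5)). Adding the four displayed formulas produces exactly the asserted expression for $\Theta\left(\left[\smallmatrix a & m \\ n & b \endsmallmatrix\right]\right)$.

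Conditions (1)--(5) then follow by evaluating the Lie identity on the remaining elementary brackets. The brackets $[a,a']$ and $[b,b']$ give that $\delta_1$ and $\mu_4$ are Lie derivations of $A$ and $B$, together with $\mu_1([a,a'])=0$ and $\delta_4([b,b'])=0$; the bracket $[a,b]=0$ gives $\delta_4(B)\subseteq\mathcal{Z}(A)$ and $\mu_1(A)\subseteq\mathcal{Z}(B)$, completing (3); the bracket $[m,n]$, through its two diagonal Peirce entries, gives the remaining identities in (1) and (2); and the brackets $[a,m]$, $[m,b]$, $[b,n]$, $[n,a]$ give (4) and (5). In each of these the off-diagonal Peirce entries of the resulting equation involve $m_0,n_0$ only through reassociations such as $(am)n_0=a(mn_0)$, which hold automatically by associativity in $\mathcal{G}$ (equivalently, by the Morita-context axioms for $\Phi_{MN}$ and $\Psi_{NM}$), so no spurious constraints emerge.

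I do not expect a conceptual obstacle: the argument is a long but routine bookkeeping exercise. The only point that needs a genuine argument rather than bare computation is the appeal to $2$-torsion freeness in the first display, where the equation governing the ``wrong-corner'' Peirce component of $\Theta(m)$ (resp.\ $\Theta(n)$) reads $2\nu=0$ (resp.\ $2\mu=0$) and hence forces that component to vanish. The remaining work is simply the volume of near-identical Peirce computations and the care needed to keep the signs and the two pairings straight throughout.
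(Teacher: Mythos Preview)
Your argument is correct and is precisely the standard Peirce-decomposition proof of this result: fix the idempotent $e_1$, read off $m_0,n_0$ from $\Theta(e_1)$, determine $\Theta$ on each Peirce corner via the brackets $[e_1,\,\cdot\,]$, and then harvest conditions (1)--(5) from the brackets $[a,a']$, $[b,b']$, $[a,b]$, $[m,n]$, $[a,m]$, $[m,b]$, $[n,a]$, $[b,n]$. The one subtle step you flag --- using $2$-torsion freeness to kill the ``wrong'' off-diagonal components of $\Theta(m)$ and $\Theta(n)$ --- is exactly the place where that hypothesis is needed, and your treatment of it is fine.

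As for comparison with the paper: there is nothing to compare against. The paper does not prove this lemma at all; it merely quotes it from \cite[Proposition~4.1]{LiWei1} and uses it as a black box. Your write-up is therefore strictly more than what the present paper supplies, and it follows the same line of argument one finds in the cited source.
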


\begin{lemma}\cite[Proposition 4.2]{LiWei1}\label{3.2}
An additive mapping $\Theta_{\rm d}$ is a derivation of
$\mathcal{G}$ if and only if $\Theta_d$ has the form
$$
\begin{aligned}
& \Theta_{\rm d}\left(\left[
\begin{array}
[c]{cc}%
a & m\\
n & b\\
\end{array}
\right]\right) \\
=& \left[
\begin{array}
[c]{cc}%
\delta_1(a)-mn_0-m_0n & am_0-m_0b+\tau_2(m)\\
n_0a-bn_0+\nu_3(n) & n_0m+nm_0+\mu_4(b)\\
\end{array}
\right] ,\\
& \forall \left[
\begin{array}
[c]{cc}%
a & m\\
n & b\\
\end{array}
\right]\in \mathcal{G},
\end{aligned}
$$
where $m_0\in M, n_0\in N$ and
$$
\begin{aligned} \delta_1:& A \longrightarrow A, &
 \tau_2: & M\longrightarrow M, & \tau_3: & N\longrightarrow M,\\
\nu_2: & M\longrightarrow N, & \nu_3: & N\longrightarrow N , &
\mu_4: & B\longrightarrow B
\end{aligned}
$$
are all $\mathcal{R}$-linear mappings satisfying the following
conditions:
\begin{enumerate}
\item[{\rm(1)}] $\delta_1$ is a derivation of $A$ with
$\delta_1(mn)=\tau_2(m)n+m\nu_3(n);$

\item[{\rm(2)}] $\mu_4$ is a derivation of $B$ with
$\mu_4(nm)=n\tau_2(m)+\nu_3(n)m;$

\item[{\rm(3)}] $\tau_2(am)=a\tau_{2}(m)+\delta_1(a)m$ and
$\tau_2(mb)=\tau_2(m)b+m\mu_4(b);$

\item[{\rm(4)}] $\nu_3(na)=\nu_3(n)a+n\delta_1(a)$ and
$\nu_3(bn)=b\nu_3(n)+\mu_4(b)n;$
\end{enumerate}
\end{lemma}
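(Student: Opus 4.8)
The plan is to prove both implications by exploiting the Peirce decomposition of $\mathcal{G}$ relative to the two canonical idempotents $e=\left[\smallmatrix 1_A & 0\\ 0 & 0\endsmallmatrix\right]$ and $f=\left[\smallmatrix 0 & 0\\ 0 & 1_B\endsmallmatrix\right]$, under which $\mathcal{G}=e\mathcal{G}e\oplus e\mathcal{G}f\oplus f\mathcal{G}e\oplus f\mathcal{G}f$ is identified with $A\oplus M\oplus N\oplus B$. The structural observation that organizes the whole argument is that the terms involving $m_0$ and $n_0$ in the displayed formula are exactly the inner derivation $\mathrm{ad}_S\colon X\mapsto SX-XS$ attached to $S=\left[\smallmatrix 0 & -m_0\\ n_0 & 0\endsmallmatrix\right]$: a one-line multiplication check gives $\mathrm{ad}_S\!\left(\left[\smallmatrix a & m\\ n & b\endsmallmatrix\right]\right)=\left[\smallmatrix -mn_0-m_0n & am_0-m_0b\\ n_0a-bn_0 & n_0m+nm_0\endsmallmatrix\right]$, which matches the non-block part of $\Theta_{\rm d}$ term by term. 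Thus the displayed form is precisely $D_0+\mathrm{ad}_S$, where $D_0$ is the block-component map $\left[\smallmatrix a & m\\ n & b\endsmallmatrix\right]\mapsto\left[\smallmatrix \delta_1(a) & \tau_2(m)\\ \nu_3(n) & \mu_4(b)\endsmallmatrix\right]$.

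For necessity, suppose $\Theta_{\rm d}$ is a derivation. The Leibniz rule on $1\cdot 1=1$ gives $\Theta_{\rm d}(1)=2\Theta_{\rm d}(1)$, so $\Theta_{\rm d}(1)=0$ and $\Theta_{\rm d}(f)=-\Theta_{\rm d}(e)$. Writing $\Theta_{\rm d}(e)=\left[\smallmatrix a_0 & m_0\\ n_0 & b_0\endsmallmatrix\right]$ and applying the Leibniz rule to $e^2=e$ yields $\Theta_{\rm d}(e)e+e\Theta_{\rm d}(e)=\Theta_{\rm d}(e)$, whose diagonal blocks read $2a_0=a_0$ and $b_0=0$; since $\mathcal{G}$ is $2$-torsion free, $a_0=b_0=0$ and hence $\Theta_{\rm d}(e)=\left[\smallmatrix 0 & m_0\\ n_0 & 0\endsmallmatrix\right]$. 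I then set $D_0:=\Theta_{\rm d}-\mathrm{ad}_S$, which is again a derivation with $D_0(e)=D_0(f)=0$. The vanishing of $D_0$ on the idempotents forces it to preserve every Peirce block: for instance $D_0(m)=D_0(emf)=eD_0(m)f\in M$, and similarly $D_0$ sends $A,N,B$ into themselves. This defines $\delta_1,\tau_2,\nu_3,\mu_4$ as the restrictions of $D_0$, and simultaneously forces the cross-maps $\tau_3\colon N\to M$ and $\nu_2\colon M\to N$ to be zero.

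Conditions $(1)$--$(4)$ are then read off by inserting one element from each Peirce slot into the Leibniz identity for $D_0$. The product $A\cdot A$ shows $\delta_1$ is a derivation of $A$, while $M\cdot N$ gives $\delta_1(mn)=\tau_2(m)n+m\nu_3(n)$, completing $(1)$; the symmetric computations $B\cdot B$ and $N\cdot M$ give $(2)$; and the mixed products $A\cdot M$, $M\cdot B$, $N\cdot A$, $B\cdot N$ give the four module-compatibility identities in $(3)$ and $(4)$. Re-adding $\mathrm{ad}_S$ recovers the displayed form, establishing necessity. For sufficiency I argue in reverse: since inner derivations are always derivations, it is enough to show that $D_0$ built from maps satisfying $(1)$--$(4)$ is a derivation, for then $\Theta_{\rm d}=D_0+\mathrm{ad}_S$ is one too. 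As $D_0$ respects the grading, verifying $D_0(XY)=D_0(X)Y+XD_0(Y)$ reduces to the finitely many pairs of Peirce components; the products $A\cdot N$, $M\cdot A$, $B\cdot M$, $N\cdot B$ and their kin vanish identically and are trivially consistent, while the eight surviving products $A\cdot A,\ A\cdot M,\ M\cdot N,\ M\cdot B,\ N\cdot A,\ N\cdot M,\ B\cdot N,\ B\cdot B$ each reproduce exactly one clause of $(1)$--$(4)$.

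The only genuinely non-routine step is the structural recognition that $\Theta_{\rm d}(e)$ manufactures an inner derivation absorbing all the $m_0,n_0$ terms; once this reduction is in place, both directions collapse to bookkeeping over the Peirce grading, with $2$-torsion-freeness used only to force $a_0=b_0=0$. A minor point worth flagging is that although $\tau_3$ and $\nu_2$ appear in the list of data, the requirement $D_0(e)=0$ makes them vanish, which is why they are absent from the displayed formula and from conditions $(1)$--$(4)$.
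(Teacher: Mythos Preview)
The paper does not supply its own proof of this lemma; it is quoted verbatim from \cite[Proposition~4.2]{LiWei1} and used as a black box. So there is nothing in the present paper to compare your argument against.

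That said, your proof is correct and is in fact the standard way to establish this result. The key idea---subtracting off the inner derivation $\mathrm{ad}_S$ determined by $\Theta_{\rm d}(e)$ so as to reduce to a block-diagonal derivation $D_0$---is exactly right, and the bookkeeping over the Peirce decomposition is accurate. Two small remarks. First, the step ``$2a_0=a_0$ and $b_0=0$, hence $a_0=b_0=0$'' does not actually require $2$-torsion-freeness: $2a_0=a_0$ already gives $a_0=0$ in any abelian group. So your closing comment that $2$-torsion-freeness is ``used only to force $a_0=b_0=0$'' overstates the role of that hypothesis; in fact the characterization of derivations here needs no torsion assumption at all (it is the Lie-derivation lemma, Lemma~\ref{3.1}, that genuinely uses it). Second, you correctly observe that the maps $\tau_3$ and $\nu_2$ listed in the statement are vestigial: they are forced to vanish and play no role in the formula or the conditions. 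This is a quirk of the statement as quoted, not of your argument.
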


In \cite{Cheung}, Cheung gave a necessary and sufficient condition
such that each Lie derivation on a triangular algebra has the
standard form $(\spadesuit)$. We next extend it to the generalized
matrix algebras context.

\begin{theorem}\label{3.3}
Let $\Theta_{\rm Lied}$ be a Lie derivation of a generalized matrix
algebra $\mathcal{G}$. Then $\Theta_{\rm Lied}$ is of the standard
form $(\spadesuit)$ if and only if there exist linear mappings
$l_A:A\rightarrow {\mathcal Z}(A)$ and $l_B:B\rightarrow {\mathcal
Z}(B)$ satisfying
\begin{enumerate}
\item[{\rm(1)}] $p_A=\delta_1-l_A$ is a derivation on $A$,
$l_A([a,a'])=0$, $l_A(mn)=\delta_4(nm)$, $l_A(a)m=m\mu_1(a)$ and
$nl_A(a)=\mu_1(a)n$.

\item[{\rm(2)}] $p_B=\mu_4-l_B$ is a derivation on $B$,
$l_B([b,b'])=0$, $l_B(nm)=\mu_1(mn)$, $l_B(b)n=n\delta_4(b)$ and
$ml_B(b)=\delta_4(b)m$.
\end{enumerate}
\end{theorem}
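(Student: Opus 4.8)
The plan is to prove the two directions separately, using the structural description of Lie derivations and derivations of $\mathcal{G}$ supplied by Lemma \ref{3.1} and Lemma \ref{3.2}. Write the Lie derivation $\Theta_{\rm Lied}$ in the canonical form of Lemma \ref{3.1}, with its data $\delta_1,\delta_4,\tau_2,\nu_3,\mu_1,\mu_4$ and the elements $m_0,n_0$. A standard decomposition $\Theta_{\rm Lied}=D+\Delta$ must have $D$ given by Lemma \ref{3.2} and $\Delta\colon\mathcal{G}\to\mathcal{Z}(\mathcal{G})$ vanishing on commutators; since $\mathcal{Z}(\mathcal{G})$ consists of diagonal matrices $\mathrm{diag}(z_A,z_B)$ with $z_A\in\mathcal{Z}(A)$, $z_B\in\mathcal{Z}(B)$ and the compatibility relations $z_Am=mz_B$, $nz_A=z_Bn$, the map $\Delta$ is determined by a pair of linear maps $l_A\colon A\to\mathcal{Z}(A)$, $l_B\colon B\to\mathcal{Z}(B)$ together with whatever $\Delta$ does on $M$ and $N$; but comparing the off-diagonal entries of $\Theta_{\rm Lied}$ and of a derivation from Lemma \ref{3.2} shows that $\tau_2$ and $\nu_3$ must be common to $D$ and $\Theta_{\rm Lied}$, so $\Delta$ kills $M$ and $N$ and is exactly $\mathrm{diag}(l_A,l_B)$ on the diagonal, with $l_A=\delta_1-p_A$ and $l_B=\mu_4-p_B$ for the diagonal parts $p_A,p_B$ of $D$.

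For the ``only if'' direction, suppose such a decomposition exists. Then $p_A=\delta_1-l_A$ and $p_B=\mu_4-l_B$ are derivations of $A$ and $B$ (being the diagonal components of the derivation $D$ from Lemma \ref{3.2}, conditions (1)--(2) there). The requirement $\Delta([x,y])=0$ for all $x,y\in\mathcal{G}$, evaluated on diagonal matrices, forces $l_A([a,a'])=0$ and $l_B([b,b'])=0$. Writing $\Theta_{\rm Lied}=D+\Delta$ and matching the $(1,1)$ and $(2,2)$ entries when we feed in off-diagonal matrices $\mathrm{antidiag}(m,n)$: the $(1,1)$ entry of $[\,\cdot\,]$-type products produces $mn$ and the derivation $D$ contributes $\tau_2(m)n+m\nu_3(n)$ to $\delta_1^{D}(mn)=p_A(mn)$ while $\Theta_{\rm Lied}$ gives $\delta_1(mn)=\delta_4(nm)+\tau_2(m)n+m\nu_3(n)$ by Lemma \ref{3.1}(1); subtracting yields $l_A(mn)=\delta_4(nm)$. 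Symmetrically $l_B(nm)=\mu_1(mn)$ comes from Lemma \ref{3.1}(2). Finally the relations $l_A(a)m=m\mu_1(a)$, $nl_A(a)=\mu_1(a)n$, $l_B(b)n=n\delta_4(b)$, $ml_B(b)=\delta_4(b)m$ are precisely the centrality conditions $z_Am=mz_B$, $nz_A=z_Bn$ applied to $\Delta\left(\mathrm{diag}(a,0)\right)=\mathrm{diag}(l_A(a),\mu_1(a))$ and $\Delta\left(\mathrm{diag}(0,b)\right)=\mathrm{diag}(\delta_4(b),l_B(b))$, which must lie in $\mathcal{Z}(\mathcal{G})$.

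For the ``if'' direction, given $l_A,l_B$ with properties (1)--(2), define $\Delta\colon\mathcal{G}\to\mathcal{G}$ by
$$
\Delta\left(\left[\begin{array}{cc} a & m\\ n & b\end{array}\right]\right)=\left[\begin{array}{cc} l_A(a)+\delta_4(b) & 0\\ 0 & \mu_1(a)+l_B(b)\end{array}\right]
$$
and set $D=\Theta_{\rm Lied}-\Delta$. One checks $\Delta$ lands in $\mathcal{Z}(\mathcal{G})$: the $A$-component $l_A(a)+\delta_4(b)$ is central in $A$ (as $l_A$ is $\mathcal{Z}(A)$-valued and $\delta_4$ is by Lemma \ref{3.1}), similarly for $B$, and the cross relations $(l_A(a)+\delta_4(b))m=m(\mu_1(a)+l_B(b))$ and $n(l_A(a)+\delta_4(b))=(\mu_1(a)+l_B(b))n$ follow by adding the four identities in (1)--(2). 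That $\Delta$ annihilates commutators follows from $l_A([a,a'])=0$, $l_B([b,b'])=0$, together with $l_A(mn-nm')\!$-type cancellations: on a commutator the diagonal $A$-entry involves $mn$ minus a term, and $l_A(mn)=\delta_4(nm)$ plus $\delta_4([b,b'])=0$ (Lemma \ref{3.1}(3)) make the contributions cancel; likewise for the $B$-entry using $l_B(nm)=\mu_1(mn)$ and $\mu_1([a,a'])=0$. Finally, $D=\Theta_{\rm Lied}-\Delta$ has the off-diagonal entries of $\Theta_{\rm Lied}$ unchanged and diagonal entries $p_A(a)=\delta_1(a)-l_A(a)$, $p_B(b)=\mu_4(b)-l_B(b)$ with no $\delta_4,\mu_1$ terms; one then verifies $D$ satisfies conditions (1)--(4) of Lemma \ref{3.2} by subtracting the defining identities of $l_A,l_B$ from the corresponding identities (1)--(5) of Lemma \ref{3.1} — e.g. Lemma \ref{3.1}(4), $\tau_2(am)=a\tau_2(m)+\delta_1(a)m-m\mu_1(a)$, becomes $\tau_2(am)=a\tau_2(m)+p_A(a)m$ once we use $l_A(a)m=m\mu_1(a)$. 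Hence $D$ is a derivation and $\Theta_{\rm Lied}=D+\Delta$ is standard. The main obstacle I anticipate is the bookkeeping in showing $\Delta$ vanishes on \emph{all} commutators (not just diagonal ones): one must expand $[x,y]$ fully, read off its diagonal blocks, and see that all the mixed $M$-$N$ contributions are exactly absorbed by the identities $l_A(mn)=\delta_4(nm)$, $l_B(nm)=\mu_1(mn)$ and the vanishing of $\delta_4,\mu_1$ on commutators; keeping the left/right module actions straight here is where a careless sign or ordering error would creep in.
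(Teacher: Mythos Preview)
Your proposal is correct and follows essentially the same route as the paper: both directions hinge on writing $\Theta_{\rm Lied}$ via Lemma~\ref{3.1}, identifying the central piece $\Delta$ (the paper's $h$) as $\mathrm{diag}(l_A(a)+\delta_4(b),\mu_1(a)+l_B(b))$, and reading off the required identities from the description of $\mathcal{Z}(\mathcal{G})$ and from matching the conditions of Lemma~\ref{3.1} against those of Lemma~\ref{3.2}. The only cosmetic difference is that you obtain $l_A([a,a'])=0$ directly from the hypothesis that $\Delta$ annihilates commutators, whereas the paper instead invokes the Lie derivation identity for $h$ on diagonal elements; and you extract $l_A(mn)=\delta_4(nm)$ by subtracting Lemma~\ref{3.2}(1) for $D$ from Lemma~\ref{3.1}(1) for $\Theta_{\rm Lied}$, while the paper applies Lemma~\ref{3.1} to $h$ itself --- these are equivalent one-line observations.
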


\begin{proof}
Let us first prove the necessity. Suppose that $\Theta_{\rm
Lied}=\delta+h$, where $\delta$ is a derivation and $h$ maps
$\mathcal{G}$ into ${\mathcal Z}(\mathcal{G})$. Then by Lemma
\ref{3.2}, there exist linear mappings $l_A:A\rightarrow A$ and
$l_B:B\rightarrow B$ such that $p_A=\delta_1-l_A$ is a derivation of
$A$ and $p_B=\mu_4-l_B$ is a derivation of $B$. This gives
$$
h\left(\left[
\begin{array}
[c]{cc}%
a & m\\
n & b\\
\end{array}
\right]\right)=\left[
\begin{array}
[c]{cc}%
l_A(a)+\delta_4(b) & 0\\
0 & \mu_1(a)+l_B(b)\\
\end{array}
\right]\in Z(\mathcal {G}), \forall \left[
\begin{array}
[c]{cc}%
a & m\\
n & b\\
\end{array}
\right]\in \mathcal{G}.
$$
By Lemma \ref{3.1} we know that $\delta_4(b)\in {\mathcal Z}(A)$,
$\mu_1(a)\in {\mathcal Z}(B)$ for all $b\in B$ and $a\in A$. Then
the structure of ${\mathcal Z}(\mathcal{G})$ implies that $l_A$ maps
into ${\mathcal Z}(A)$ and $l_B$ maps into ${\mathcal Z}(B)$.
Furthermore, $l_A(a)m=m\mu_1(a)$, $nl_A(a)=\mu_1(a)n$,
$l_B(b)n=m\delta_4(b)$ and $ml_B(b)=\delta_4(b)m$ are also follow.

Note that $h$ is also a Lie derivation of $\mathcal{G}$. In view of
Lemma \ref{3.1} we have that $l_A(mn)=\delta_4(nm)$
and $l_B(nm)=\mu_1(mn)$ for all $m\in M, n\in N$.
Let us choose $G_1=\left[\smallmatrix a & 0\\
0 & 0 \endsmallmatrix \right]$ and $G_2=\left[\smallmatrix a' & 0\\
0 & 0 \endsmallmatrix \right]$ and take them into
$$
h([G_1, G_2])=[h(G_1), G_2]+[G_1, h(G_2)]. \eqno(3.1)
$$
It is not difficult to calculate that
$$
\begin{aligned}
h([G_1, G_2])=& \left[
\begin{array}
[c]{cc}%
l_A([a, a']) & 0\\
0 & \mu_1([a, a'])\\
\end{array}
\right]
\end{aligned}
\eqno(3.2)
$$
and
$$
\begin{aligned}
&[h(G_1), G_2]+[G_1, h(G_2)]= \left[
\begin{array}
[c]{cc}%
[l_A(a), a']+[a, l_A(a')] & 0\\
0 & 0\\
\end{array}
\right].
\end{aligned}
\eqno(3.3)
$$
It follows from $l_A(a), l_A(a')\in {\mathcal Z}(A)$ that $[l_A(a),
a']+[a, l_A(a')]=0$. Combining $(3.2)$ with $(3.3)$ yields that
$l_A([a, a'])=0$. Similarly, if we take $G_1=\left[\smallmatrix 0 & 0\\
0 & b \endsmallmatrix \right]$ and $G_2=\left[\smallmatrix 0 & 0\\
0 & b' \endsmallmatrix \right]$ in $(3.1)$, then $l_B([b,b'])=0$
will be obtained.

\smallskip\smallskip

Let us see the sufficiency. Let us take
$$
\delta\left(\left[
\begin{array}
[c]{cc}%
a & m\\
n & b\\
\end{array}
\right]\right)=\left[
\begin{array}
[c]{cc}%
p_A(a)-mn_0-m_0n & am_0-m_0b+\tau_2(m)\\
n_0a-bn_0+\nu_3(n) & n_0m+nm_0+p_B(b)\\
\end{array}
\right], \forall \left[
\begin{array}
[c]{cc}%
a & m\\
n & b\\
\end{array}
\right]\in \mathcal{G}
$$
and
$$
h\left(\left[
\begin{array}
[c]{cc}%
a & m\\
n & b\\
\end{array}
\right]\right)=\left[
\begin{array}
[c]{cc}%
l_A(a)+\delta_4(b) & 0\\
0 & \mu_1(a)+l_B(b)\\
\end{array}
\right], \forall \left[
\begin{array}
[c]{cc}%
a & m\\
n & b\\
\end{array}
\right]\in \mathcal{G}.
$$
It is easy to verify that $\delta$ is a derivation of $\mathcal{G}$
and $h$ maps into ${\mathcal Z}(\mathcal{G})$. For arbitrary $G=\left[\smallmatrix a & m\\
n & b \endsmallmatrix \right]$ and $G'=\left[\smallmatrix a' & m'\\
n' & b' \endsmallmatrix \right]$, we have
$$h([G, G'])=\left[
\begin{array}
[c]{cc}%
l_A(x+u)+\delta_4(v+y) & 0\\
0 & \mu_1(x+u)+l_B(v+y)\\
\end{array}
\right],$$ where $x=[a, a']$, $y=[b, b']$, $u=mn'-m'n$ and
$v=nm'-n'm$. Note that condition (1) implies that
$l_A(x+u)+\delta_4(v+y)=0$ and condition (2) implies that
$\mu_1(x+u)+l_B(v+y)=0$. Therefore $h([G, G'])=0$.
\end{proof}

The following corollary provides us a sufficient condition which
enable each Lie derivation of $\mathcal {G}(A, M, N, B)$ to be
standard, where $M$ is faithful as left $A$-module and as right
$B$-module.

\begin{corollary}\label{3.4}
Let $\mathcal {G}=\left[
\begin{array}
[c]{cc}%
A & M\\
N & B\\
\end{array}
\right]$ be a generalized matrix algebra. Suppose that $M$ is
faithful as a left $A$-module and is also faithful as a right
$B$-module. If ${\mathcal Z}(A)=\pi_A({\mathcal Z}(\mathcal {G}))$
and ${\mathcal Z}(B)=\pi_B({\mathcal Z}(\mathcal {G}))$, then every
Lie derivation of $\mathcal {G}$ has the standard form
$(\spadesuit)$.
\end{corollary}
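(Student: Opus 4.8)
The plan is to verify the criterion of Theorem~\ref{3.3}: it suffices to exhibit $\mathcal{R}$-linear maps $l_A\colon A\to\mathcal{Z}(A)$ and $l_B\colon B\to\mathcal{Z}(B)$ satisfying its conditions~(1) and~(2). Because $M$ is faithful both as a left $A$-module and as a right $B$-module, the discussion preceding Remark~\ref{2.1} provides an algebra isomorphism $\varphi\colon\pi_A(\mathcal{Z}(\mathcal{G}))\to\pi_B(\mathcal{Z}(\mathcal{G}))$ characterized by $am=m\varphi(a)$ and $na=\varphi(a)n$ for all $a\in\pi_A(\mathcal{Z}(\mathcal{G}))$, $m\in M$, $n\in N$. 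Under the two center hypotheses $\pi_A(\mathcal{Z}(\mathcal{G}))=\mathcal{Z}(A)$ and $\pi_B(\mathcal{Z}(\mathcal{G}))=\mathcal{Z}(B)$, so $\varphi$ becomes an isomorphism from $\mathcal{Z}(A)$ onto $\mathcal{Z}(B)$. Given a Lie derivation $\Theta_{\rm Lied}$ with components $\delta_1,\delta_4,\tau_2,\nu_3,\mu_1,\mu_4$ as in Lemma~\ref{3.1}, and recalling $\delta_4(B)\subseteq\mathcal{Z}(A)$, $\mu_1(A)\subseteq\mathcal{Z}(B)$, I would set
$$
l_A=\varphi^{-1}\circ\mu_1\colon A\longrightarrow\mathcal{Z}(A), \qquad l_B=\varphi\circ\delta_4\colon B\longrightarrow\mathcal{Z}(B);
$$
these are well defined exactly because of the center hypotheses, which is in fact the only place those hypotheses enter.

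Next I would dispatch the routine part of conditions~(1)--(2). The four module-action identities ($l_A(a)m=m\mu_1(a)$, $nl_A(a)=\mu_1(a)n$, $ml_B(b)=\delta_4(b)m$, $l_B(b)n=n\delta_4(b)$) follow immediately from the defining relations of $\varphi$ together with $\varphi(l_A(a))=\mu_1(a)$ and $\varphi(\delta_4(b))=l_B(b)$; and since Lemma~\ref{3.1}(3) gives $\mu_1([a,a'])=0$, $\delta_4([b,b'])=0$, linearity of $\varphi^{\pm1}$ yields $l_A([a,a'])=0$, $l_B([b,b'])=0$. What remains is to prove that $p_A=\delta_1-l_A$ is a derivation of $A$, that $p_B=\mu_4-l_B$ is a derivation of $B$, and that $l_A(mn)=\delta_4(nm)$, $l_B(nm)=\mu_1(mn)$. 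This is where the faithfulness of $M$ enters essentially.

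For the derivation property I would use the module-action identities to recast Lemma~\ref{3.1}(4)--(5) as $\tau_2(am)=a\tau_2(m)+p_A(a)m$, $\tau_2(mb)=\tau_2(m)b+mp_B(b)$, and the two analogues for $\nu_3$; running the associativity relations $(a_1a_2)m=a_1(a_2m)$ and $m(b_1b_2)=(mb_1)b_2$ through $\tau_2$ then shows that $p_A(a_1a_2)-p_A(a_1)a_2-a_1p_A(a_2)$ annihilates $M$ on the left while $p_B(b_1b_2)-p_B(b_1)b_2-b_1p_B(b_2)$ annihilates $M$ on the right, so faithfulness forces $p_A$ and $p_B$ to be derivations. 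For the last pair of identities, set $f(m,n)=\delta_4(nm)-l_A(mn)\in\mathcal{Z}(A)$; a short computation with $\mathcal{Z}(\mathcal{G})$ and the definition of $l_B$ gives $\mu_1(mn)-l_B(nm)=-\varphi(f(m,n))$, so both identities reduce to $f\equiv0$. To obtain $f\equiv0$ I would push the mixed associativity relations $(mn)m'=m(nm')$, $(am)n=a(mn)$, $(mb)n=m(bn)$ through $\tau_2$ and, symmetrically, through $\nu_3$, substituting Lemma~\ref{3.1}(1)--(2); this should produce $f(am,n)=af(m,n)$, $f(m,na)=f(m,n)a$, $f(mb,n)=f(m,bn)$, together with the skew relations $f(m,n)m'=-f(m',n)m$ and $nf(m,n')=-n'f(m,n)$. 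From the skew relations and $2$-torsion-freeness one gets $f(m,n)m=0$, so $f(m,n)$ annihilates the sub-bimodule of $M$ generated by $m$; combining this with the bilinearity of $f$ and faithfulness of $M$ then forces $f(m,n)=0$. With $l_A,l_B$ verified, Theorem~\ref{3.3} yields that $\Theta_{\rm Lied}$ is of the form $(\spadesuit)$. The construction of $l_A,l_B$ and the routine checks are straightforward; the main obstacle I foresee is the identity $l_A(mn)=\delta_4(nm)$ — that is, squeezing $f\equiv0$ out of the full set of compatibility relations in Lemma~\ref{3.1} by combining them through associativity and then collapsing with faithfulness.
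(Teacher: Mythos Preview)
Your approach coincides with the paper's: both define $l_A=\varphi^{-1}\circ\mu_1$ and $l_B=\varphi\circ\delta_4$ (equivalently, the center-valued map $h$ with diagonal entries $\varphi^{-1}(\mu_1(a))+\delta_4(b)$ and $\mu_1(a)+\varphi(\delta_4(b))$) and then argue that $\Theta_{\rm Lied}-h$ is a derivation. The paper compresses everything after that definition into the single phrase ``a direct computation,'' whereas you unpack most of the conditions of Theorem~\ref{3.3} explicitly; your verification of the module-action identities, of $l_A([a,a'])=l_B([b,b'])=0$, and of the derivation property for $p_A,p_B$ via faithfulness of $M$ is correct.

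The one place your proposal is genuinely incomplete is exactly the step you flag: deducing $f(m,n):=\delta_4(nm)-l_A(mn)=0$. Your derivation of the skew relation $f(m,n)m'=-f(m',n)m$ (and its $N$-analogue) is correct, and it does yield $f(m,n)m=0$, hence $f(m,n)\cdot AmB=0$. But the sentence ``combining this with the bilinearity of $f$ and faithfulness of $M$ then forces $f(m,n)=0$'' is not justified: knowing that $f(m,n)$ kills the sub-bimodule generated by $m$ does not, by itself together with bilinearity, force $f(m,n)M=0$, because $f(m,n)m'=-f(m',n)m$ merely recovers the skew relation rather than producing a new constraint. To close the gap one has to intertwine the $M$-side and $N$-side skew relations (using $\varphi(f(m,n))=l_B(nm)-\mu_1(mn)$ and the right $B$-faithfulness of $M$ as well), along the lines of the argument in Du--Wang \cite{DuWang}; the paper's own proof does not spell this out either, so your proposal is at the same level of detail as the published argument on precisely this point.
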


\begin{proof}
Let $\Theta$ be a Lie derivation of $\mathcal {G}$ with the form
described in Lemma \ref{3.1}. Then it follows from ${\mathcal
Z}(A)=\pi_A({\mathcal Z}(\mathcal {G}))$ and ${\mathcal
Z}(B)=\pi_B({\mathcal Z}(\mathcal {G}))$ and Lemma \ref{3.1} that
$$
\begin{aligned}
& h\left(\left[
\begin{array}
[c]{cc}%
a & m\\
n & b\\
\end{array}
\right]\right)  =& \left[
\begin{array}
[c]{cc}%
\varphi^{-1}(\mu_1(a))+\delta_4(b) & 0\\
0 & \mu_1(a)+\varphi(\delta_4(b))\\
\end{array}
\right]\in Z(\mathcal {G}).
\end{aligned}
$$
On the other hand, it is a direct computation that $\Theta-h$ is a
derivation of $\mathcal {G}$. This completes the proof.
\end{proof}

\begin{remark}\label{3.5}
Du and Wang obtained one much more general version of Corollary
\ref{3.4} in \cite{DuWang}. We omit the details here.
\end{remark}

\begin{corollary}\label{3.6}
Let $\mathcal{U}=\left[
\begin{array}
[c]{cc}%
A & M\\
O & B\\
\end{array}
\right]$ be a triangular algebra. Suppose that $M$ is faithful as a
left $A$-module and is also faithful as a right $B$-module. If
${\mathcal Z}(A)=\pi_A({\mathcal Z}(\mathcal {U}))$ and ${\mathcal
Z}(B)=\pi_B({\mathcal Z}(\mathcal {U}))$, then every Lie derivation
of $\mathcal {U}$ is of the standard form $(\spadesuit)$.
\end{corollary}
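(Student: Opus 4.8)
The plan is to observe that a triangular algebra is exactly a generalized matrix algebra with vanishing lower-left corner, and then to quote Corollary~\ref{3.4}. First I would record the identification: given $\mathcal{U}=\left[\smallmatrix A & M\\ O & B\endsmallmatrix\right]$, set $N=O$; then the pairings $\Phi_{MN}\colon M\otimes_B N\to A$ and $\Psi_{NM}\colon N\otimes_A M\to B$ are necessarily zero and the Morita-context axioms hold trivially, so $(A,B,{}_AM_B,O,0,0)$ is a Morita context and $\mathcal{U}$ is the associated generalized matrix algebra $\mathcal{G}=\left[\smallmatrix A & M\\ N & B\endsmallmatrix\right]$ with $N=0$. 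As stressed in Section~\ref{xxsec2.1}, nothing in the development of Section~\ref{xxsec3} requires $M$ or $N$ to be nonzero, so Lemmas~\ref{3.1} and~\ref{3.2}, Theorem~\ref{3.3}, and Corollary~\ref{3.4} all apply to $\mathcal{U}$ verbatim.

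Next I would check that the two hypotheses carry over. When $N=0$ the description of $\mathcal{Z}(\mathcal{G})$ recalled in Section~\ref{xxsec2.1} degenerates, since the condition $na=bn$ for all $n\in N$ becomes vacuous: thus $\mathcal{Z}(\mathcal{U})$ consists exactly of the matrices $\left[\smallmatrix a & 0\\ 0 & b\endsmallmatrix\right]$ with $a\in\mathcal{Z}(A)$, $b\in\mathcal{Z}(B)$ and $am=mb$ for all $m\in M$, which is the ordinary center of a triangular algebra. Hence $\pi_A(\mathcal{Z}(\mathcal{U}))$ and $\pi_B(\mathcal{Z}(\mathcal{U}))$ are the usual objects, and the assumptions $\mathcal{Z}(A)=\pi_A(\mathcal{Z}(\mathcal{U}))$ and $\mathcal{Z}(B)=\pi_B(\mathcal{Z}(\mathcal{U}))$ are precisely the hypotheses of Corollary~\ref{3.4}. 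Since $M$ is faithful as a left $A$-module and as a right $B$-module, the algebra isomorphism $\varphi\colon\pi_A(\mathcal{Z}(\mathcal{U}))\to\pi_B(\mathcal{Z}(\mathcal{U}))$ used in the proof of Corollary~\ref{3.4} is available. Applying Corollary~\ref{3.4} with $N=0$ then yields that every Lie derivation of $\mathcal{U}$ is of the standard form $(\spadesuit)$.

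There is really no obstacle here: the statement is a genuine specialization, and the substantive work --- showing that, with $l_A=\varphi^{-1}\circ\mu_1$ and $l_B=\varphi\circ\delta_4$, the map $\Theta_{\rm Lied}-h$ is an honest derivation of $\mathcal{G}$, which hinges on the center conditions and the faithfulness of $M$ --- has already been carried out for Corollary~\ref{3.4}. If one preferred a self-contained route through Theorem~\ref{3.3}, one would take $\Theta_{\rm Lied}$ in the form of Lemma~\ref{3.1} (with the $N$-entries and $\nu_3$ trivial because $N=0$), set the same $l_A$ and $l_B$, and verify conditions~(1)--(2) of Theorem~\ref{3.3}; the identities involving $M\otimes N$ such as $l_A(mn)=\delta_4(nm)$ hold trivially since both sides vanish, $l_A([a,a'])=0$ and $l_B([b,b'])=0$ follow from $l_A,l_B$ being central-valued, the intertwinings $l_A(a)m=m\mu_1(a)$ and $ml_B(b)=\delta_4(b)m$ come from the definition of $\varphi$, and the only computation with any content is that $p_A=\delta_1-l_A$ and $p_B=\mu_4-l_B$ are derivations --- again exactly the faithfulness-driven step of Corollary~\ref{3.4}. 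Either way, the conceptual point is that faithfulness of $M$ transports centrality between the two corners, forcing the central part of a Lie derivation to be diagonal and to kill commutators.
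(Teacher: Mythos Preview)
Your approach is correct and coincides with the paper's: Corollary~\ref{3.6} is stated there immediately after Corollary~\ref{3.4} with no separate proof, precisely because a triangular algebra is the case $N=0$ of a generalized matrix algebra and Corollary~\ref{3.4} applies verbatim. One minor quibble in your aside on the self-contained route: $l_A([a,a'])=0$ does not follow merely from $l_A$ being central-valued but rather from $\mu_1([a,a'])=0$ in Lemma~\ref{3.1}(3); this does not affect your main argument.
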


\smallskip\smallskip

Let us continue to develop Theorem $11$ of \cite{Cheung} to the case
of generalized matrix algebras. As in \cite{Cheung}, we need to give
two preliminary lemmas.

\begin{lemma}\label{3.7}
Let $\delta_1$ be a Lie derivation of $A$ and let $\mu_1:
A\rightarrow \mathcal {Z}(B)$ and $\tau_2: M\rightarrow M$, $\nu_3:
N\rightarrow N$ be linear mappings satisfying
$$
\tau_2(am)=a\tau_2(m)+\delta_1(a)m-m\mu_1(a)
$$ and
$$
\nu_3(na)=\nu_3(n)a+n\delta_1(a)-\mu_1(a)n.
$$ Define $G: A\times
A\rightarrow A$ by
$$
G(x,
y)=\delta_1(xy)-x\delta_1(y)-\delta_1(x)y.
$$
Then the following statements hold:
\begin{enumerate}
\item[(1)] $G(x, y)=G(y, x)$;
\item[(2)] $G(x, y)m=m\mu_1(xy)-xm\mu_1(y)-ym\mu_1(x)$; \\$nG(x,
y)=\mu_1(xy)n-\mu_1(y)nx-\mu_1(x)ny$.
\item[(3)] Let $f(t)=\sum_{j=0}^k r_jt^j\in K[t]$ and $x\in A$. Then
there exists $a_x\in A$ such that
$$a_xm=m\mu_1(f(x))-f'(x)m\mu_1(x)$$ for all $m\in M$ and
$$na_x=\mu_1(f(x))n-\mu_1(x)nf'(x)$$  for all $n\in N$, where
$f'(t)=\sum_{j=1}^kjr_jt^{j-1}$.
\end{enumerate}

Moreover, if $\delta_1$ is of the standard form, that is,
$\delta_1=p_A\,+\,l_A$, where $p_A$ is a derivation of $A$ and $l_A$
maps into ${\mathcal Z}(A)$, then $a_x=l_A(f(x))-f'(x)l(x))$.
\end{lemma}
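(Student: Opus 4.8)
The plan is to dispatch (1), (2), (3) in turn using only the two bimodule compatibility identities imposed on $\tau_2$ and $\nu_3$ together with the Lie-derivation property of $\delta_1$, and then to read off the ``moreover'' clause from the explicit recursion built in the proof of (3). For (1), I would simply expand $G(x,y)-G(y,x)$ and collect terms; after cancellation one is left with $\delta_1([x,y])-[\delta_1(x),y]-[x,\delta_1(y)]$, which is $0$ precisely because $\delta_1$ is a Lie derivation of $A$.

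For (2), the idea is to compute $\tau_2\big((xy)m\big)$ in two ways. Applying the $\tau_2$-hypothesis directly to the element $xy\in A$ gives $(xy)\tau_2(m)+\delta_1(xy)m-m\mu_1(xy)$; writing $(xy)m=x(ym)$ and applying the $\tau_2$-hypothesis twice gives $xy\,\tau_2(m)+x\delta_1(y)m-xm\mu_1(y)+\delta_1(x)ym-ym\mu_1(x)$. Cancelling the common summand $xy\,\tau_2(m)$ and rearranging produces exactly $G(x,y)m=m\mu_1(xy)-xm\mu_1(y)-ym\mu_1(x)$. The companion identity on $N$ is obtained symmetrically by computing $\nu_3\big(n(xy)\big)$ through the factorization $n(xy)=(nx)y$ and the $\nu_3$-hypothesis; since $N$ is a $B$--$A$ bimodule the factors come out in the opposite order, which is why the stated shape is $nG(x,y)=\mu_1(xy)n-\mu_1(y)nx-\mu_1(x)ny$.

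For (3), both asserted identities are linear in $f$, and $f\mapsto f'$ is linear, so I would reduce to monomials $f(t)=t^j$ and induct on $j\ge0$. The base case $j=0$ is handled by $a_x=\delta_1(1_A)$: specializing the $\tau_2$- and $\nu_3$-hypotheses at $a=1_A$ gives $\delta_1(1_A)m=m\mu_1(1_A)$ and $n\delta_1(1_A)=\mu_1(1_A)n$, which is exactly what is required for $f=1$, $f'=0$. For the step $j\to j+1$, assuming $a_x$ works for $t^j$, I would take $G(x,x^j)+x\,a_x$ as the candidate for $t^{j+1}$; rewriting $G(x,x^j)m$ by (2) with $y=x^j$ and adding $x\,a_x m$, the terms $\pm xm\mu_1(x^j)$ cancel and what remains is $m\mu_1(x^{j+1})-(j+1)x^j m\mu_1(x)$, which is correct since $(t^{j+1})'=(j+1)t^j$; the $N$-side cancels in the same way, applying the induction hypothesis to $n'=nx$ and using the second part of (2). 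Summing over $j$ with linearity of $\mu_1$ then gives $a_x$ for an arbitrary $f$.

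Finally, when $\delta_1=p_A+l_A$ with $p_A$ a derivation of $A$ (so $p_A(1_A)=0$, using $2$-torsion-freeness) and $l_A$ central, the derivation part of $G$ vanishes and $G(x,y)=l_A(xy)-xl_A(y)-yl_A(x)$. Feeding this into the recursion from (3), a short induction gives $a_x^{(t^j)}=l_A(x^j)-(t^j)'(x)\,l_A(x)$, and linearity in $f$ yields $a_x=l_A(f(x))-f'(x)l_A(x)$. I expect (3) to be the only real friction point: fixing the recursion ($a_x$ for $t^{j+1}$ equal to $G(x,x^j)+x\,a_x$) exactly, keeping the left/right actions straight on the $N$-side, and noticing that the constant term of $f$ forces one to use the $\delta_1(1_A)$-relations that drop out of the $\tau_2$- and $\nu_3$-hypotheses.
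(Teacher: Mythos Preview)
Your proof is correct and follows essentially the same line as the paper's. The only cosmetic difference is in part (3): the paper writes down the closed form $a_x=\sum_{j=1}^{k-1}x^{k-1-j}G(x^j,x)$ for $f(t)=t^k$ and appeals to (2), whereas you build the same element recursively via $a_x^{(j+1)}=G(x,x^j)+x\,a_x^{(j)}$; unrolling your recursion (using the symmetry $G(x,x^j)=G(x^j,x)$ from (1)) gives exactly the paper's sum, so the two arguments coincide. Your treatment of the ``moreover'' clause is in fact more explicit than the paper's, which does not spell it out.
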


\begin{proof}
The relation (1) and the first one of (2) can be obtained via (i)
and (ii) of Lemma 9 in \cite{Cheung}. It suffices to prove the
second equality of (2). In fact, we have from the relation
$\nu_3(na)=\nu_3(n)a+n\delta_1(a)-\mu_1(a)n$ that
$$\nu_3(n(xy))=\nu_3(n)xy+n\delta_1(xy)-\mu_1(xy)n$$ and
$$
\begin{aligned}
\nu_3((nx)y)&=\nu_3(nx)y+nx\delta_1(y)-\mu_1(y)nx\\
&=\nu_3(n)xy+n\delta_1(x)y-\mu_1(x)ny
\end{aligned}
$$
Then comparing the above two equalities gives the required result.

In order to prove (3), it is enough to consider $f(t)=t^k$, where
$k=0, 1, 2, \cdots$. For $k=0$, we can take $a_x=\delta_1(1)$, which
is due to the conditions (4) and (5) of Lemma \ref{3.1}. For $k>0$,
let us take
$$
a_x=\sum_{j=1}^{k-1}x^{k-1-j}G(x^j, x).
$$
Then the first equality
of (2) implies that
$$
a_xm=m\mu_1(f(x))-f'(x)m\mu_1(x).
$$
The second equality of (2) implies that $$
na_x=\mu_1(f(x))n-\mu_1(x)nf'(x).
$$
\end{proof}

\begin{lemma}\label{3.8}
Assume that $\delta_1=p_A+l_A$, where $p_A$ is a derivation and
$l_A(a)\in \mathcal {Z}(A)$, $l_A([a, a'])=0$ for all $a, a'\in
A$. Let $$V_A=\{a\in A\mid l_A(a)m=m\mu_1(a),
\,\,nl_A(a)=\mu_1(a)n\,\,\,\, \forall \,\,\,m\in M,\,\,
\forall\,\,\,n\in N\}.$$ Then $V_A$ is a subalgebra of $A$
satisfying the following conditions.
\begin{enumerate}
\item[(1)] $[x, y]\in V_A$ for all $x, y\in A$.
\item[(2)] Let $f(t)\in K[t]$ and $x\in A$. If $f'(x)=0$, then $f(x)\in
V_A$.
\item[(3)] $V_A$ contains all the idempotents of $A$.
\end{enumerate}
\end{lemma}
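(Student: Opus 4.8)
The plan is to run everything through Lemma~\ref{3.7}, whose part~(3)---together with its ``Moreover'' clause, which under the standard-form hypothesis of Lemma~\ref{3.8} gives $a_x=l_A(f(x))-f'(x)l_A(x)$---already packages the polynomial identities that are needed. The only other input is Lemma~\ref{3.1}(3), namely $\mu_1([a,a'])=0$ for all $a,a'\in A$.

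First I would observe that $V_A$ is an $\mathcal{R}$-submodule, since the two defining relations $l_A(a)m=m\mu_1(a)$ and $nl_A(a)=\mu_1(a)n$ are $\mathcal{R}$-linear in $a$ (recall that $l_A$ and $\mu_1$ are linear). For (1): $l_A([x,y])=0$ by hypothesis and $\mu_1([x,y])=0$ by Lemma~\ref{3.1}(3), so both defining relations for $[x,y]$ collapse to $0=0$, whence $[x,y]\in V_A$. For (2): given $f\in K[t]$ with $f'(x)=0$, Lemma~\ref{3.7}(3) yields $a_x\in A$ with $a_xm=m\mu_1(f(x))-f'(x)m\mu_1(x)=m\mu_1(f(x))$ and $na_x=\mu_1(f(x))n-\mu_1(x)nf'(x)=\mu_1(f(x))n$, while the ``Moreover'' clause identifies $a_x=l_A(f(x))-f'(x)l_A(x)=l_A(f(x))$; comparing the two expressions gives $l_A(f(x))m=m\mu_1(f(x))$ and $nl_A(f(x))=\mu_1(f(x))n$, i.e. $f(x)\in V_A$. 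For (3): any idempotent $e=e^2$ satisfies $e=f(e)$ with $f(t)=3t^2-2t^3$, and $f'(e)=6e-6e^2=0$, so $e\in V_A$ by (2). (Equivalently, apply Lemma~\ref{3.7}(3) to $f(t)=t^2-t$ and cancel the unit $2e-1$, which is its own inverse.)

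It then remains to show that $V_A$ is closed under products. For $a,b\in V_A$, using $\delta_1=p_A+l_A$ with $p_A$ a derivation one computes $G(a,b)=l_A(ab)-al_A(b)-l_A(a)b$. Substituting into the first identity of Lemma~\ref{3.7}(2) gives $(l_A(ab)-al_A(b)-l_A(a)b)m=m\mu_1(ab)-am\mu_1(b)-bm\mu_1(a)$; since $l_A(a),l_A(b)\in\mathcal{Z}(A)$ and $a,b\in V_A$ we have $al_A(b)m=am\mu_1(b)$ and $l_A(a)bm=b\,l_A(a)m=bm\mu_1(a)$, so after cancellation $l_A(ab)m=m\mu_1(ab)$. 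The second identity of Lemma~\ref{3.7}(2) yields $nl_A(ab)=\mu_1(ab)n$ by the same bookkeeping. Together with $1\in V_A$ (immediate from (3), since $1=1^2$; or directly, by setting $a=1$ in conditions (4)--(5) of Lemma~\ref{3.1}, using $\delta_1(1)=l_A(1)$ as $p_A(1)=0$), this shows that $V_A$ is a subalgebra.

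I do not expect a genuine obstacle here: the statement is largely a repackaging of Lemma~\ref{3.7}. The only points that need care are the computation $G(a,b)=l_A(ab)-al_A(b)-l_A(a)b$ and, when applying it, correctly commuting the central elements $l_A(a),l_A(b)$ past $m$ and $n$ in order to invoke membership of $a,b$ in $V_A$; and, in (3), using a polynomial such as $3t^2-2t^3$ whose derivative vanishes at $e$, rather than the naive $t^2-t$.
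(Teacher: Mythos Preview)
Your proposal is correct and follows essentially the same route as the paper: you use Lemma~\ref{3.7}(2) together with the computation $G(a,b)=l_A(ab)-al_A(b)-l_A(a)b$ to obtain closure under products, invoke $\mu_1([x,y])=0$ and $l_A([x,y])=0$ for part~(1), apply Lemma~\ref{3.7}(3) with its ``Moreover'' clause for part~(2), and specialize to $f(t)=3t^2-2t^3$ for part~(3), exactly as the paper does. Your exposition is in fact slightly more explicit than the paper's in spelling out the cancellations in the subalgebra step.
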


\begin{proof}
For arbitrary $x, y\in V_A$, we have from (2) of Lemma \ref{3.7}
that
$$
\begin{aligned}
m\mu_1(xy)&=G(x, y)m+xm\mu_1(y)+ym\mu_1(x)\\
&=\delta_1(xy)m-x\delta_1(y)m-\delta_1(x)ym+xl_A(y)m+yl_A(x)m\\
&=p_A(xy)m+l_A(xy)m-xp_A(y)m\\
&\hspace{10pt}-xl_A(y)m-p_A(x)ym-l_A(x)ym+xl_A(y)m+yl_A(x)m
\end{aligned}
$$
Note that $p_A$ is a derivation and $l_A(x)\in \mathcal {Z}(A)$.
Then we obtain $$m\mu_1(xy)=l_A(xy)m. \eqno(3.4)$$ On the other
hand,
$$
\begin{aligned}
\mu_1(xy)n&=nG(x, y)+\mu_1(y)nx+\mu_1(x)ny\\
&=n\delta_1(xy)-nx\delta_1(y)-n\delta_1(x)y+nl_A(y)x+nl_A(x)y\\
&=np_A(xy)+nl_A(xy)-nxp_A(y)\\
&\hspace{10pt}-nxl_A(y)-np_A(x)y-nl_A(x)y+nl_A(y)x+nl_A(x)y\\
&=nl_A(xy),
\end{aligned}
$$
That is,
$$
\mu_1(xy)n=nl_A(xy). \eqno(3.5)
$$
Combining $(3.4)$ with $(3.5)$ yields that $V_A$ is a subalgebra of
$A$.

We now prove the other three conditions. Clearly, (1) follows from
that $\mu_1$ annihilates all commutators.

In order to prove (2), let us take $x\in A$ with $f'(x)=0$ and
$f(t)\in \mathcal {R} [t]$. In view of (3) of Lemma \ref{3.7}, we
know that there exists $a_x\in A$ such that $a_xm=m\mu_1(f(x))$ and
$na_x=\mu_1(f(x))n$. Since $\delta_1$ is of the standard form,
$a_xm=l_A(f(x))m$ and $na_x=nl_A(f(x))$ by Lemma \ref{3.7}.
Therefore $l_A(f(x))m=m\mu_1(f(x))$ and $nl_A(f(x))=\mu_1(f(x))n$.
That is, $f(x)\in V_A$.

The proof of (3) is the same as that of (4) of Lemma 10 in
\cite{Cheung}. For the convenience of readers, we copy it here. In
fact, for arbitrary idempotent $e\in A$, let us put
$f(t)=3t^2-2t^3$. Then clearly, $f'(e)=0$. Hence $e=f(e)\in V_A$.
\end{proof}

As in \cite{Cheung}, we define $W(X)$ to be the smallest subalgebra
of an algebra $X$ satisfying conditions (1)-(3) of Lemma \ref{3.8}.
Then the following result is a direct consequence of Theorem
\ref{3.3}.

\begin{corollary}\label{3.9}
Let $\mathcal{G}=\left[
\begin{array}
[c]{cc}%
A & M\\
N & B\\
\end{array}
\right]$ be a generalized matrix algebra with zero bilinear
pairings. If
\begin{enumerate}
\item[(1)] $W(A)=A$ and every Lie derivation of $A$ is of the
standard form $(\spadesuit)$;
\item[(2)] $W(B)=B$ and every Lie derivation of $B$
is of the standard form $(\spadesuit)$,
\end{enumerate}
then each Lie derivation of $\mathcal {G}$ is of the standard form
$(\spadesuit)$.
\end{corollary}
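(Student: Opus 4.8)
The plan is to invoke Theorem \ref{3.3} and verify its two sets of conditions directly from the hypotheses $W(A)=A$, $W(B)=B$, together with the assumption that every Lie derivation of $A$ and of $B$ is standard. So let $\Theta_{\rm Lied}$ be a Lie derivation of $\mathcal{G}$, written in the form given by Lemma \ref{3.1} with data $\delta_1,\delta_4,\tau_2,\nu_3,\mu_1,\mu_4$ and $m_0,n_0$. Since the pairings are zero, $\Phi_{MN}=0$ and $\Psi_{NM}=0$, so the relations $\delta_1(mn)=\delta_4(nm)+\tau_2(m)n+m\nu_3(n)$ and $\mu_4(nm)=\mu_1(mn)+n\tau_2(m)+\nu_3(n)m$ of Lemma \ref{3.1}(1)--(2) simplify: $mn=0$ and $nm=0$, so these impose no content beyond $\tau_2(m)n=0$-type terms, which also vanish. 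In particular $\delta_1$ is a genuine Lie derivation of $A$ and $\mu_4$ a genuine Lie derivation of $B$, so by hypothesis (1) and (2) we may write $\delta_1=p_A+l_A$ and $\mu_4=p_B+l_B$ with $p_A,p_B$ derivations and $l_A,l_B$ mapping into the respective centers and annihilating all commutators.

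The key step is then to check that this choice of $l_A$ and $l_B$ satisfies conditions (1) and (2) of Theorem \ref{3.3}. The clauses $l_A([a,a'])=0$ and $l_B([b,b'])=0$ hold by construction. The clauses $l_A(mn)=\delta_4(nm)$ and $l_B(nm)=\mu_1(mn)$ hold trivially because $mn=0$ and $nm=0$ (and $\delta_4$, $\mu_1$ are linear, hence kill $0$). The remaining four clauses are the substantive ones: $l_A(a)m=m\mu_1(a)$, $nl_A(a)=\mu_1(a)n$, $l_B(b)n=n\delta_4(b)$, and $ml_B(b)=\delta_4(b)m$. Here is where $W(A)=A$ and $W(B)=B$ enter. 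By Lemma \ref{3.8}, the set $V_A=\{a\in A\mid l_A(a)m=m\mu_1(a),\ nl_A(a)=\mu_1(a)n\ \forall m,n\}$ is a subalgebra of $A$ satisfying conditions (1)--(3) of that lemma; since $W(A)$ is by definition the smallest such subalgebra, $W(A)\subseteq V_A$, and as $W(A)=A$ we get $V_A=A$, i.e. the first two clauses hold for all $a\in A$. Symmetrically, defining $V_B$ analogously with the roles of the pairs $(l_B,\delta_4)$ in place of $(l_A,\mu_1)$ and applying the $B$-version of Lemma \ref{3.8} (interchanging $M$ and $N$, and $A$ and $B$, throughout), $W(B)=B$ forces $V_B=B$, giving the last two clauses for all $b\in B$.

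With both hypotheses of Theorem \ref{3.3} verified, that theorem yields that $\Theta_{\rm Lied}$ is of the standard form $(\spadesuit)$, which is exactly the conclusion of Corollary \ref{3.9}. I expect the only real subtlety to be bookkeeping: making sure that Lemma \ref{3.7} and Lemma \ref{3.8}, as stated for $A$ (with $\mu_1:A\to\mathcal{Z}(B)$, $\tau_2$ on $M$, $\nu_3$ on $N$), have the correct mirror form for $B$. Inspecting Lemma \ref{3.1}, the conditions on $\mu_4$, $\delta_4$ and the module actions of $B$ on $M$, $N$ are exactly symmetric to those on $\delta_1$, $\mu_1$ after swapping $A\leftrightarrow B$ and $M\leftrightarrow N$, so the $B$-analogues of Lemmas \ref{3.7} and \ref{3.8} hold verbatim and $V_B$ is a subalgebra with properties (1)--(3); hence $W(B)\subseteq V_B$. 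Apart from confirming this duality, every step is a direct substitution, so the argument is short once the structural results of Section~3 are in hand.
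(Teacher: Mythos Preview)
Your proposal is correct and follows the same approach as the paper: both prove the corollary by verifying the hypotheses of Theorem \ref{3.3}. The paper's own proof is a single terse sentence asserting that assumption (1) yields condition (1) of Theorem \ref{3.3} and assumption (2) yields condition (2); you have simply written out the details of this implication, including the role of Lemma \ref{3.8} in showing $W(A)\subseteq V_A$ (hence $V_A=A$), the triviality of the $l_A(mn)=\delta_4(nm)$ clause under zero pairings, and the need for the $B$-symmetric version of Lemmas \ref{3.7}--\ref{3.8}.
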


\begin{proof}
Since the bilinear pairings are both zero, the assumption (1)
implies the condition (1) of Theorem \ref{3.3} and the assumption
(2) implies the condition (2) of Theorem \ref{3.3}.
\end{proof}

\begin{corollary}\cite[Theorem 11]{Cheung}
Every Lie derivation of a triangular algebra $Tri(A,M,B)$ is of the
standard form if the following conditions hold:
\begin{enumerate}
\item[(1)] $W(A)=A$ and every Lie derivation of $A$ is of the
standard form; \item[(2)] $W(B)=B$ and every Lie derivation of $B$
is of the standard form,
\end{enumerate}
\end{corollary}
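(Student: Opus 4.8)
The plan is to derive this statement as an immediate specialization of Corollary \ref{3.9}. First I would observe that a triangular algebra $Tri(A,M,B)$ is, by definition, the generalized matrix algebra $\mathcal{U}=\left[\smallmatrix A & M\\ O & B\endsmallmatrix\right]$ in which the lower-left bimodule $N$ is the zero module. In particular both Morita pairings $\Phi_{MN}\colon M\otimes_B N\to A$ and $\Psi_{NM}\colon N\otimes_A M\to B$ are forced to vanish, since their domains vanish. Hence $\mathcal{U}$ is a generalized matrix algebra with zero bilinear pairings, which is exactly the situation covered by Corollary \ref{3.9}.

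Next I would match the hypotheses. The subalgebras $W(A)$ and $W(B)$ occurring in the statement are the smallest subalgebras of $A$ and $B$ satisfying conditions (1)--(3) of Lemma \ref{3.8}; those three conditions refer only to commutators, to evaluations $f(x)$ of polynomials with $f'(x)=0$, and to idempotents, so they are intrinsic to $A$ and $B$ and do not depend on the choice of $M$ or $N$. Therefore the hypotheses ``$W(A)=A$ and every Lie derivation of $A$ is standard'' and ``$W(B)=B$ and every Lie derivation of $B$ is standard'' are literally conditions (1) and (2) of Corollary \ref{3.9} for the algebra $\mathcal{U}$.

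Applying Corollary \ref{3.9} to $\mathcal{U}$ then gives that every Lie derivation of $\mathcal{U}=Tri(A,M,B)$ is of the standard form $(\spadesuit)$, which is the assertion. There is essentially no obstacle here: the only point worth double-checking is that the notion of $W(\cdot)$ used in the statement coincides with the one built from Lemma \ref{3.8}. This holds because Cheung's original definition in \cite{Cheung} uses precisely the triangular-algebra analogue of $V_A$, and although for a triangular algebra the membership condition defining $V_A$ collapses to the single requirement $l_A(a)m=m\mu_1(a)$ for all $m\in M$, the generating conditions (1)--(3) of Lemma \ref{3.8}, and hence $W(A)$ itself, are unchanged. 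Thus the result is a one-line corollary of the generalized matrix algebra theorem, recovering Theorem $11$ of \cite{Cheung}.
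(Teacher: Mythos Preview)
Your proposal is correct and matches the paper's approach: the paper gives no explicit proof for this corollary, placing it immediately after Corollary~\ref{3.9} as an evident specialization, and your argument---that $Tri(A,M,B)$ is the generalized matrix algebra with $N=0$ and hence zero bilinear pairings, so Corollary~\ref{3.9} applies verbatim---is exactly the intended one-line deduction. Your additional remarks about $W(\cdot)$ being intrinsic to $A$ and $B$ are accurate but more than the paper bothers to spell out.
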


At the end of this section, let us illustrate an application of
Corollary \ref{3.9}. We will construct a class of algebras with
bilinear pairings being both zero, which is called
\textit{generalized one-point extension algebras} in our situation.
Note that they are not triangular algebras. We observe that each Lie
derivation of a generalized one-point extension algebra is of the
standard form $(\spadesuit)$. Moreover, the standard decomposition
is unique.

\begin{definition}
Let $(\Gamma_0, \Gamma_1)$ be a finite quiver without oriented
cycles and $|\Gamma_0|\geq 2$. Let $\Gamma^{\ast}$ be a quiver
whose vertex set is $\Gamma_0$ and
$$\Gamma_1^{\ast}=\{\alpha^{\ast}: i\rightarrow j\mid \alpha: j\rightarrow i
\,\,\,\text{is an arrow in} \,\,\,\Gamma_1\}.$$ For a path
$p=\alpha_n\cdots\alpha_1$ in $\Gamma$, write the path
$\alpha_1^{\ast}\cdots\alpha_n^{\ast}$ in $\Gamma^{\ast}$ by
$p^{\ast}$. Given a set $\rho$ of relations, denote by
$\Lambda=K(\Gamma, \rho)$. Define the generalized one-point
extension algebra $E(\Lambda)$ to be the path algebra of the quiver
$(\Gamma_0, \Gamma_1\cup\Gamma_1^{\ast})$ with relations
$$\rho\,\,\cup\,\,\rho^{\ast}\,\,
\cup\,\,\{\alpha\beta^{\ast}\mid\alpha,\beta\in\Gamma_1\}\,\,
\cup\,\,\{\alpha^{\ast}\beta\mid\alpha,\beta\in\Gamma_1\}.$$
\end{definition}

In order to study Lie derivations of $E(\Lambda)$, we need the
following lemmas.

\begin{lemma}\label{3.12}
$W(E(\Lambda))=E(\Lambda)$.
\end{lemma}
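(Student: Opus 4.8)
The plan is to work directly with the path-algebra presentation of $E(\Lambda)$ and to exploit the fact that, unlike in a dual extension, the relations $\{\alpha\beta^\ast\mid\alpha,\beta\in\Gamma_1\}\cup\{\alpha^\ast\beta\mid\alpha,\beta\in\Gamma_1\}$ kill every mixed path, so that the doubled quiver $(\Gamma_0,\Gamma_1\cup\Gamma_1^\ast)$ has no surviving oriented cycle of positive length. This forces each positive-length basis element to be a commutator with an idempotent, and the idempotents $e_i$ take care of the rest.

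First I would record a $K$-basis of $E(\Lambda)$: the trivial paths $e_i$ $(i\in\Gamma_0)$ together with the nonzero classes in $E(\Lambda)$ of paths $p$ of length $\geq 1$. I claim every such $p$ is (the class of) a path lying entirely in $\Gamma$ or entirely in $\Gamma^\ast$. Indeed, if $p=\gamma_n\cdots\gamma_1$ involved an arrow from $\Gamma_1$ and also one from $\Gamma_1^\ast$, then the finite sequence recording, for each $m$, whether $\gamma_m\in\Gamma_1$ or $\gamma_m\in\Gamma_1^\ast$ would not be constant, so some adjacent pair $\gamma_{m+1}\gamma_m$ would be of the form $\alpha^\ast\beta$ or $\alpha\beta^\ast$ with $\alpha,\beta\in\Gamma_1$; any path with such a factor lies in $\langle\rho\cup\rho^\ast\cup\{\alpha\beta^\ast\}\cup\{\alpha^\ast\beta\}\rangle$, whence the class of $p$ is zero. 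Since $\Gamma$ has no oriented cycles, and $\Gamma^\ast$ is just $\Gamma$ with all arrows reversed, $\Gamma^\ast$ has no oriented cycles either; consequently $s(p)\neq e(p)$ for every basis path $p$ of length $\geq 1$.

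Next I would invoke the defining closure properties of $W(E(\Lambda))$. As a subalgebra containing all idempotents it contains every $e_i$, hence the subalgebra $\sum_{i\in\Gamma_0}Ke_i$ spanned by the trivial paths. Now take a basis path $p$ of length $\geq 1$ and set $i=e(p)$. Then $e_ip=p$, while $pe_i=0$ because $s(p)\neq i$, so
$$[\,e_i,\,p\,]=e_ip-pe_i=p .$$
Since $W(E(\Lambda))$ contains all commutators $[x,y]$ with $x,y\in E(\Lambda)$, we conclude $p\in W(E(\Lambda))$. Thus $W(E(\Lambda))$ is a $K$-subspace of $E(\Lambda)$ containing a basis, so $W(E(\Lambda))=E(\Lambda)$.

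The proof is short, and the only step requiring genuine care is the reduction in the second paragraph: one must verify that the extra relations $\alpha^\ast\beta$ --- present in $E(\Lambda)$ but not in $\mathscr D(\Lambda)$ --- together with the relations $\alpha\beta^\ast$ really do force every nonzero positive-length path to be an honest path of $\Gamma$ or of $\Gamma^\ast$, and hence never a loop at a vertex. This is exactly what fails for a dual extension, where paths such as $\beta^\ast\alpha$ are loops at a vertex and the identity $p=[e_{e(p)},p]$ collapses; that is precisely why the dual-extension case treated in Section $4$ is substantially harder and requires the machinery of Theorem \ref{3.3} and Lemmas \ref{3.7}--\ref{3.8}.
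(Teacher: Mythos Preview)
Your proof is correct and follows essentially the same approach as the paper: both arguments note that $W(E(\Lambda))$ contains all idempotents $e_i$ by definition, and then realize positive-length paths as commutators $[e_{e(p)},p]$ using that $s(p)\neq e(p)$. The only cosmetic difference is that the paper applies this commutator identity just to the arrows $\alpha$ and $\alpha^\ast$ and then invokes the subalgebra property of $W(E(\Lambda))$ to get all paths, whereas you first argue that every surviving positive-length path lives entirely in $\Gamma$ or $\Gamma^\ast$ (hence is not a loop) and then apply the commutator identity to each such path directly; your extra paragraph is thus not strictly needed, but it is correct and makes the absence of loops explicit.
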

\begin{proof}
According to the definition, $W(E(\Lambda))$ contains all
idempotents $e_i$. Furthermore, for arbitrary arrow $\alpha$ with
$e(\alpha)=j$, the fact $\alpha=[e_j, \alpha]$ implies that
$\alpha\in W(E(\Lambda))$. With the same reason, $\alpha^{\ast}\in
W(E(\Lambda))$ and then $W(E(\Lambda))=E(\Lambda)$.
\end{proof}

Since $\Gamma$ is a quiver without oriented cycles, we can take a
source $i$ in $\Gamma$. Let $e_i$ be the corresponding idempotent in
$E(\Lambda)$. Then $E(\Lambda)$ is isomorphic to a generalized
matrix algebra $\mathcal{G}=\left[
\begin{array}
[c]{cc}%
A & M\\
N & B\\
\end{array}
\right]$ with $A\simeq E(\Lambda')$, where the quiver $(\Gamma',
\rho')$ of $\Lambda'$ is obtained via removing the vertex $i$ and
the relations starting at $i$. Moreover, in view of the construction
of $E(\Lambda)$ we know that the bilinear pairings are both zero.

\begin{lemma}\label{3.13}
An additive mapping $\Theta_{\rm d}$ is a derivation of $E(\Lambda)$
if and only if $\Theta_d$ has the form
$$
\begin{aligned}
& \Theta_{\rm d}\left(\left[
\begin{array}
[c]{cc}%
a & m\\
n & b\\
\end{array}
\right]\right) =& \left[
\begin{array}
[c]{cc}%
\delta_1(a) & am_0-m_0b+\tau_2(m)\\
n_0a-bn_0+\nu_3(n) & 0\\
\end{array}
\right] ,\\ &  \forall \left[
\begin{array}
[c]{cc}%
a & m\\
n & b\\
\end{array}
\right]\in \mathcal{G},
\end{aligned}
$$
where $m_0\in M, n_0\in N$ and
$$
\begin{aligned} \delta_1:& A \longrightarrow A, &
 \tau_2: & M\longrightarrow M, & \nu_3: & N\longrightarrow N
\end{aligned}
$$
are all $\mathcal{R}$-linear mappings satisfying the following
conditions:
\begin{enumerate}
\item[{\rm(1)}] $\delta_1$ is a derivation of $A$;

\item[{\rm(2)}] $\tau_2(am)=a\tau_{2}(m)+\delta_1(a)m$ and
$\tau_2(mb)=\tau_2(m)b;$

\item[{\rm(3)}] $\nu_3(na)=\nu_3(n)a+n\delta_1(a)$ and
$\nu_3(bn)=b\nu_3(n).$
\end{enumerate}
\end{lemma}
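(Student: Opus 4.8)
The plan is to deduce Lemma \ref{3.13} directly from Lemma \ref{3.2} by exploiting the two degeneracies of the generalized matrix algebra $\mathcal{G}=\left[\smallmatrix A & M\\ N & B\endsmallmatrix\right]$ attached to $E(\Lambda)$ via the source idempotent $e_i$: first, both bilinear pairings $\Phi_{MN}$ and $\Psi_{NM}$ vanish (already observed just before the lemma), and second, the corner $B=e_iE(\Lambda)e_i$ is isomorphic to the ground field $K$. Everything in the statement is then forced by specializing the general description of derivations in Lemma \ref{3.2}.

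First I would verify $B\cong K$. Since $i$ is a source of $\Gamma$, no nontrivial path of $\Gamma$ ends at $i$; dually, the arrows of $\Gamma^{\ast}$ leaving $i$ are exactly the duals of the arrows of $\Gamma$ entering $i$, of which there are none, so no nontrivial path of $\Gamma^{\ast}$ begins at $i$. As the relations $\{\alpha\beta^{\ast}\}$ and $\{\alpha^{\ast}\beta\}$ force every nonzero path of $E(\Lambda)$ to consist either of original arrows only or of starred arrows only, the only nonzero path from $i$ to $i$ is the trivial path $e_i$; hence $e_iE(\Lambda)e_i=Ke_i\cong K$, and consequently the zero map is the only $K$-linear derivation of $B$. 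For the ``only if'' direction I would then take the shape of $\Theta_{\rm d}$ supplied by Lemma \ref{3.2} and simplify: the hypotheses $\Phi_{MN}=0=\Psi_{NM}$ annihilate the terms $mn_0$, $m_0n$, $n_0m$, $nm_0$ in the diagonal entries and render the relations $\delta_1(mn)=\tau_2(m)n+m\nu_3(n)$ and $\mu_4(nm)=n\tau_2(m)+\nu_3(n)m$ vacuous ($0=0$); moreover $\mu_4=0$ by the previous sentence, so the $(2,2)$-entry collapses to $0$, the $(1,1)$-entry to $\delta_1(a)$, and conditions (3)--(4) of Lemma \ref{3.2} become precisely conditions (2)--(3) of the present lemma, leaving ``$\delta_1$ is a derivation of $A$'' from condition (1).

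For the ``if'' direction I would run the converse half of Lemma \ref{3.2}: given a map of the displayed form satisfying (1)--(3), set $\mu_4=0$ and (harmlessly) $\tau_3=0$, $\nu_2=0$, and check conditions (1)--(4) of Lemma \ref{3.2}; by the same use of $\Phi_{MN}=0=\Psi_{NM}$ these reduce to (1)--(3) here, so the map is a derivation of $\mathcal{G}\cong E(\Lambda)$. I do not expect a genuine obstacle here — the argument is entirely the reduction of Lemma \ref{3.2}'s bookkeeping under the two degeneracies. The one point deserving a little care is the structural claim $B\cong K$, i.e. that the ``$i$-to-$i$'' corner of $E(\Lambda)$ is one-dimensional, since this is exactly what kills $\mu_4$ and makes all $b$-dependent terms disappear; alternatively one could bypass Lemma \ref{3.2} and prove the lemma from scratch by the standard idempotent-decomposition computation, but that would only re-derive the special case already contained in Lemma \ref{3.2}.
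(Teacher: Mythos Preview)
Your proposal is correct and follows the same route as the paper: reduce to Lemma \ref{3.2}, use the vanishing of both pairings to kill the cross terms, and then argue $\mu_4=0$. The paper's proof is a single line (``this is clear by condition (2) of Lemma \ref{3.2}''), which implicitly uses exactly the structural fact you make explicit, namely $B=e_iE(\Lambda)e_i\cong K$ so that the only derivation of $B$ is zero; your verification of this via the path analysis is the content the paper leaves to the reader.
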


\begin{proof}
Since the bilinear pairings are both zero, by Lemma \ref{3.2}, we
only need to show that $\mu_4=0$. But this is clear by condition (2)
of Lemma \ref{3.2}.
\end{proof}

\begin{lemma}\label{3.14}
Every derivation $\Theta$ of $E(\Lambda)$ with ${\rm
Im}(\Theta)\in \mathcal {Z}(E(\Lambda))$ is zero.
\end{lemma}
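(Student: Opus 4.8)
The plan is to work directly with generators. The algebra $E(\Lambda)$ is generated as a $K$-algebra by the trivial paths $e_l$ ($l\in\Gamma_0$) together with the arrows in $\Gamma_1\cup\Gamma_1^{\ast}$, so a derivation of $E(\Lambda)$ is completely determined by its values on this set, and it suffices to show that $\Theta$ kills every generator. Two elementary observations drive the argument: since $\Gamma$ (and hence $\Gamma^{\ast}$) has no oriented cycles, every $x\in\Gamma_1\cup\Gamma_1^{\ast}$ joins two \emph{distinct} vertices, so that $x=e_{e(x)}\,x\,e_{s(x)}$ with $s(x)\neq e(x)$; and any central element $z\in\mathcal{Z}(E(\Lambda))$ satisfies $e_kze_j=e_ke_jz=0$ whenever $k\neq j$, since $z$ commutes with $e_j$ and the $e_l$ are pairwise orthogonal idempotents.

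First I would dispose of the idempotents. Applying $\Theta$ to $e_l^2=e_l$ and using that $\Theta(e_l)$ is central, hence commutes with $e_l$, gives $\Theta(e_l)=2e_l\Theta(e_l)$; left-multiplying by $e_l$ yields $e_l\Theta(e_l)=2e_l\Theta(e_l)$, so the $2$-torsion freeness assumption forces $e_l\Theta(e_l)=0$, and therefore $\Theta(e_l)=0$ for every $l\in\Gamma_0$.

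Next I would take an arrow $x\in\Gamma_1\cup\Gamma_1^{\ast}$ from $j$ to $k$, so that $k\neq j$ and $x=e_kxe_j$. Using the Leibniz rule and $\Theta(e_j)=\Theta(e_k)=0$ from the previous step, I get $\Theta(x)=\Theta(e_kxe_j)=e_k\Theta(x)e_j$; but $\Theta(x)$ is central, so $e_k\Theta(x)e_j=e_ke_j\Theta(x)=0$ because $e_ke_j=0$, whence $\Theta(x)=0$. Thus $\Theta$ annihilates the whole generating set $\{e_l\}_{l\in\Gamma_0}\cup\Gamma_1\cup\Gamma_1^{\ast}$, and being a derivation it then annihilates every product of generators, i.e.\ all of $E(\Lambda)$; so $\Theta=0$.

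I do not expect any genuine obstacle: the only feature of $E(\Lambda)$ one needs is that $\Gamma$ has no loops or cycles, which is exactly what makes $e_k\Theta(x)e_j$ vanish. As an alternative, one could stay inside the generalized matrix picture of Lemma~\ref{3.13}: requiring the two off-diagonal entries appearing there to vanish identically forces $m_0=n_0=0$ and $\tau_2=\nu_3=0$, after which $\delta_1$ is a derivation of $A\simeq E(\Lambda')$ whose image lies in $\mathcal{Z}(A)$, hence is zero by induction on $|\Gamma_0|$; but the generator argument above is shorter and needs no induction.
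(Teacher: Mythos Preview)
Your proof is correct, but it takes a genuinely different route from the paper's. The paper argues by contradiction and recursion through the generalized matrix structure: if $\Theta\neq 0$, then by Lemma~\ref{3.13} the induced map $\delta_1$ on $A\simeq E(\Lambda')$ is a nonzero derivation with image in $\mathcal{Z}(A)$; iterating along the chain $E(\Lambda)\supset E(\Lambda')\supset\cdots$ one eventually lands on a nonzero derivation of $K$, which is absurd. Your argument instead works directly with the quiver generators, first killing the idempotents via $\Theta(e_l^2)=e_l$ and $2$-torsion freeness, then killing each arrow $x=e_kxe_j$ ($k\neq j$) by the single observation that a central element sandwiched between orthogonal idempotents vanishes. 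This is shorter, entirely self-contained, and needs neither Lemma~\ref{3.13} nor induction; the paper's approach, on the other hand, fits naturally into the recursive framework already established and reused in the proofs of Proposition~\ref{3.15} and Theorem~\ref{4.3}. You even anticipate the paper's method in your final paragraph as the ``alternative'' route.
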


\begin{proof}
If there exists a nonzero derivation $\Theta$ of $E(\Lambda)$ with
${\rm Im}(\Theta)\in \mathcal {Z}(E(\Lambda))$, then it follows from
Lemma \ref{3.13} that $\delta_1\neq 0$ and ${\rm Im}(\delta_1)\in
\mathcal {Z}(A)$. Repeating this process continuously, we eventually
get that there exists a nonzero derivation $f$ of $K$. However, this
is impossible.
\end{proof}

\begin{proposition}\label{3.15}
Each Lie derivation of $E(\Lambda)$ is of the standard form
$(\spadesuit)$. Moreover, the standard decomposition is unique.
\end{proposition}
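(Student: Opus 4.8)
The plan is to verify the two sufficient conditions of Corollary~\ref{3.9} for the generalized matrix realization $\mathcal{G}=\left[\smallmatrix A & M\\ N & B\endsmallmatrix\right]$ of $E(\Lambda)$ obtained by picking a source $i$ of $\Gamma$. We already know from Lemma~\ref{3.12} that $W(E(\Lambda))=E(\Lambda)$, and the same argument applied to $A\simeq E(\Lambda')$ (and iteratively to its own generalized-matrix pieces) gives $W(A)=A$; the $B$-corner is $e_iE(\Lambda)e_i\simeq K$, which is trivially $W(B)=B$. Moreover the construction of $E(\Lambda)$ forces both bilinear pairings to vanish, so Corollary~\ref{3.9} applies once we know every Lie derivation of $A$ and of $B$ is standard. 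For $B\simeq K$ this is immediate. So the real content is the inductive claim: every Lie derivation of $E(\Lambda)$ is standard, proved by induction on $|\Gamma_0|$.

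First I would set up the induction. The base case $|\Gamma_0|=2$ is a direct check (or follows from Cheung's triangular result together with Lemma~\ref{3.14}). For the inductive step, write $E(\Lambda)\simeq\mathcal{G}$ as above with $A\simeq E(\Lambda')$ where $\Lambda'$ has strictly fewer vertices, so by the inductive hypothesis every Lie derivation of $A$ is standard; combined with $W(A)=A$ this verifies hypothesis~(1) of Corollary~\ref{3.9}. Hypothesis~(2) holds since $B\simeq K$. Hence every Lie derivation of $E(\Lambda)$ is of the standard form $(\spadesuit)$. The one subtlety is checking that $A$ genuinely has the shape $E(\Lambda')$ for a quiver with relations of the prescribed type — i.e. that deleting a source and the arrows/relations incident to it again yields a generalized one-point extension algebra; this is essentially the observation already recorded in the paragraph preceding Lemma~\ref{3.13}, and I would simply invoke it.

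For uniqueness of the decomposition $\Theta=D+\Delta$, suppose $D+\Delta=D'+\Delta'$ with $D,D'$ derivations and $\Delta,\Delta'$ mapping into $\mathcal{Z}(E(\Lambda))$ and annihilating commutators. Then $D-D'=\Delta'-\Delta$ is simultaneously a derivation of $E(\Lambda)$ and a map whose image lies in the center; by Lemma~\ref{3.14} such a derivation is zero, so $D=D'$ and consequently $\Delta=\Delta'$. This is the place where the acyclicity of $\Gamma$ is really used, via Lemma~\ref{3.14}, whose proof reduces a central-image derivation down the tower of corner algebras until one reaches a (necessarily zero) derivation of the ground field $K$.

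The main obstacle is not any single computation but making the induction bookkeeping airtight: one must be sure that the class of algebras $E(\Lambda)$ is closed under the operation ``delete a source,'' that the resulting $B$-corner is always just $K$ (so that Corollary~\ref{3.9}(2) and the base of Lemma~\ref{3.14} both go through), and that $W(A)=A$ persists at every stage. All of these are light, but they are the load-bearing steps; once they are in place the statement falls out of Corollary~\ref{3.9} and Lemma~\ref{3.14} with no further work.
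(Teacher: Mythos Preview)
Your proposal is correct and follows the same inductive scheme as the paper: realize $E(\Lambda)$ as a generalized matrix algebra with $A\simeq E(\Lambda')$ and $B\simeq K$, apply Corollary~\ref{3.9} using $W(A)=A$ (Lemma~\ref{3.12}) and $W(K)=K$, descend on $|\Gamma_0|$ to the ground field, and deduce uniqueness from Lemma~\ref{3.14}. The only slip is the parenthetical suggestion that the base case could come from Cheung's triangular theorem --- $E(\Lambda)$ has nonzero $N$ and is not triangular --- but your primary route (Corollary~\ref{3.9} with $A\simeq B\simeq K$) already handles that case.
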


\begin{proof}
Obviously, $W(K)=K$. If $|\Gamma'_0|=1$, then $A\simeq K$ and hence
$W(A)=A$. If $|\Gamma'_0|>1$, then $A$ can be viewed as a
generalized one-point extension too. Thus $W(A)=A$. By Corollary
\ref{3.9}, if each Lie derivation of $A$ has the standard form
$(\spadesuit)$, then so is $E(\Lambda)$. Note that $\Gamma$ is a
finite quiver. Repeat the above process with finite times, we arrive
at the algebra $K$. Of course, each Lie derivation of $K$ is of the
standard form. This implies that each Lie derivation of $E(\Lambda)$
is standard. The uniqueness of the standard decomposition is due to
Corollary \ref{3.14}.
\end{proof}

\section{Lie derivations of dual extensions}

\begin{lemma}\label{4.1}
Let $\Gamma$ be a finite quiver without oriented cycles,
$A=K(\Gamma, \rho)$ and $\mathscr{D}(A)$ be the dual extension of
$A$. Then $\mathscr{D}(A)=W(\mathscr{D}(A))$.
\end{lemma}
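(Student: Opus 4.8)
The plan is to argue exactly as in the proof of Lemma~\ref{3.12}. By construction $W(\mathscr{D}(A))$ is a \emph{subalgebra} of $\mathscr{D}(A)$ that contains every idempotent of $\mathscr{D}(A)$ (condition~(3) of Lemma~\ref{3.8}) and every commutator $[x,y]$ with $x,y\in\mathscr{D}(A)$ (condition~(1)). On the other hand, being a path algebra with relations, $\mathscr{D}(A)$ is generated as a $K$-algebra by the trivial paths $e_i$ ($i\in\Gamma_0$) together with the arrows $\alpha\in\Gamma_1$ and the reverse arrows $\alpha^{\ast}\in\Gamma_1^{\ast}$. Hence it suffices to check that each of these generators lies in $W(\mathscr{D}(A))$.

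Each $e_i$ is an idempotent, so $e_i\in W(\mathscr{D}(A))$. For an arrow $\alpha\colon i\to j$ in $\Gamma_1$, acyclicity of $\Gamma$ forces $i\neq j$, so that $e_j\alpha=\alpha$ while $\alpha e_j=0$; therefore $\alpha=[e_j,\alpha]\in W(\mathscr{D}(A))$. The reverse quiver $\Gamma^{\ast}$ is again without oriented cycles, so the identical computation gives $\alpha^{\ast}=[e_i,\alpha^{\ast}]\in W(\mathscr{D}(A))$ for every $\alpha^{\ast}\colon j\to i$ in $\Gamma_1^{\ast}$. Since $W(\mathscr{D}(A))$ is a subalgebra containing a $K$-algebra generating set of $\mathscr{D}(A)$, we conclude $W(\mathscr{D}(A))=\mathscr{D}(A)$.

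There is no genuine obstacle here; the only points needing care are the identification of a generating set for the path algebra $\mathscr{D}(A)$ and the use of the no-oriented-cycles hypothesis to obtain $\alpha e_j=0$ (equivalently, that the source and target of an arrow are distinct). It is worth remarking that, although $\mathscr{D}(A)$ is neither a triangular algebra nor a generalized one-point extension algebra in the sense of Section~\ref{xxsec3} (its relation set omits the terms $\alpha^{\ast}\beta$), the argument used for $E(\Lambda)$ in Lemma~\ref{3.12} transfers verbatim, because it relies only on the fact that the algebra in question is a path algebra generated by its arrows and trivial paths. This is precisely what will allow one to apply Corollary~\ref{3.9} and Theorem~\ref{3.3} to $\mathscr{D}(A)$ afterwards.
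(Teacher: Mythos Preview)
Your proof is correct and follows essentially the same approach as the paper's: show that the idempotents $e_i$ lie in $W(\mathscr{D}(A))$ by condition~(3), then realize each arrow as a commutator with a trivial path (you use $\alpha=[e_{e(\alpha)},\alpha]$, the paper uses $\alpha=[\alpha,e_{s(\alpha)}]$, which amounts to the same thing), and conclude because $W(\mathscr{D}(A))$ is a subalgebra containing a generating set. Your argument even absorbs the paper's separate one-vertex case, since when $|\Gamma_0|=1$ there are no arrows and the single idempotent already generates.
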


\begin{proof}
If $\Gamma$ only contains one vertex, then the algebra
$\mathscr{D}(A)$ is trivial. That is, $\mathscr{D}(A)\simeq K$. In
this case $W(\mathscr{D}(A))=\mathscr{D}(A)$.

Now suppose that the number of vertices in $\Gamma$ is not less than
2. It follows from the condition (3) in the definition of
$W(\mathscr{D}(A))$ that all the trivial paths are contained in
$W(\mathscr{D}\Lambda)$. On the other hand, $\Gamma$ is a quiver
without oriented cycles. Then for an arbitrary arrow
$\alpha\in\Gamma$, we have $\alpha=[\alpha, s(\alpha)]$, which is
due to the fact $\alpha s(\alpha)=\alpha$ and $s(\alpha)\alpha=0$.
The condition (1) of the definition of $W(\mathscr{D}(A))$ shows
that $\alpha\in W(\mathscr{D}(A))$. Similarly, it can be proved that
$\alpha^{\ast}\in W(\mathscr{D}(A))$. Therefore all paths are
contained in $W(\mathscr{D}(A))$. Therefore
$\mathscr{D}(A)=W(\mathscr{D}(A))$.
\end{proof}

\begin{lemma}\label{4.2}
Let $\mathscr{D}(\Lambda)$ be the dual extension of a path algebra
$\Lambda=K(\Gamma, \rho)$. Let $i$ be a source in $\Gamma$ and
$$\mathcal {P}_i=\{p\in \mathscr{P}|s(p)=e(p)=i, p^2=0\}.$$ Denote the vector space
spanned by paths of $\mathcal {P}_i$ by $V$. Assume that
$\Theta_{\rm Lied}$ is a Lie derivation on $\mathscr{D}(\Lambda)$.
Then $\Theta_{\rm Lied}(v)$ is in the center of
$\mathscr{D}(\Lambda)$ for all $v\in V$.
\end{lemma}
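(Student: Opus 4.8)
The plan is to exploit the generalized matrix algebra picture $\mathscr{D}(\Lambda)\simeq\mathcal{G}=\left[\smallmatrix A & M\\ N & B\endsmallmatrix\right]$ obtained by taking the idempotent $e_i$ at the source $i$, together with the explicit form of $\Theta_{\rm Lied}$ from Lemma \ref{3.1}. Recall that with this choice of $e_i$ one has $\Phi_{MN}=0$, so the corner $B=e_i\mathscr{D}(\Lambda)e_i$ is precisely the subalgebra spanned by $e_i$ together with the paths $p$ with $s(p)=e(p)=i$; since $\Gamma$ has no oriented cycles, every such nontrivial path factors through the reverse arrows and satisfies $p^2=0$ (it would otherwise produce an oriented cycle in $\Gamma$). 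Thus $B=Ke_i\oplus V$ as a vector space, and $V$ consists of elements squaring to zero, lying in the radical of $B$, with $VV=0$ (a product of two such paths would again be an oriented-cycle-type path of length $\ge 4$, hence zero by the relations $\alpha\beta^{\ast}=0$). In particular $B$ is a commutative local algebra with $V^2=0$.

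First I would fix $v\in V$ and compute $\Theta_{\rm Lied}$ on the element $\left[\smallmatrix 0 & 0\\ 0 & v\endsmallmatrix\right]$ using Lemma \ref{3.1}: its image is $\left[\smallmatrix -m_0n'+\delta_4(v) & -m_0v\\ -vn_0+\nu_3(\cdot) & \cdots+\mu_4(v)\endsmallmatrix\right]$ — more precisely the off-diagonal entries involve $m_0v$, $vn_0$ and the diagonal entries involve $\delta_4(v)\in\mathcal{Z}(A)$, $\mu_4(v)$. The key point is to show each of these pieces is forced to behave like a central element. Since $v^2=0$, applying the Lie derivation identity to $[v,v]=0$ gives nothing, but applying it to well-chosen brackets does: because $v\in V$ and $V^2=0$, for any $b\in B$ we have $[v,b]=0$ in $B$ (commutativity), so $\Theta_{\rm Lied}([\left[\smallmatrix 0&0\\0&v\endsmallmatrix\right],\left[\smallmatrix 0&0\\0&b\endsmallmatrix\right]])=0$, which by Lemma \ref{3.1}(2) forces $\mu_4(vb)-\mu_4(bv)=0$ and, taking $b=e_i$, extracts the behaviour of $\mu_4$ on $V$. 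The heart of the argument is that $\mu_4|_B$ is a Lie derivation of $B$ with $B/V\cong K$ having no derivations, combined with condition (5) of Lemma \ref{3.1} relating $\nu_3$ on $N$, $\mu_4$, and $\delta_4$; since $V$ annihilates $M$ from one side and $N$ from the other in the relevant corners (the relations $\alpha\beta^{\ast}=0$ kill $vm_0$-type products when $v$ is a cycle-path at the source), the off-diagonal terms $m_0v$ and $vn_0$ vanish.

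Concretely, the steps I would carry out in order are: (i) establish the structural facts about $B$ — commutative, local, $V^2=0$, $V=\mathrm{rad}(B)$, and $V\cdot M=0$, $N\cdot V=0$ inside $\mathcal{G}$ — directly from the dual-extension relations and $i$ being a source; (ii) deduce $m_0 v=0$ and $v n_0=0$ for $v\in V$, killing the off-diagonal entries of $\Theta_{\rm Lied}(\left[\smallmatrix 0&0\\0&v\endsmallmatrix\right])$; (iii) show $\delta_4(v)\in\mathcal{Z}(A)$ actually commutes appropriately with $M$ and $N$ so that $\left[\smallmatrix\delta_4(v)&0\\0&\mu_4(v)\endsmallmatrix\right]\in\mathcal{Z}(\mathcal{G})$ — here I would use $\delta_4(v)m=m\mu_4(\cdot)$-type relations from conditions (4),(5) of Lemma \ref{3.1} restricted to $V$, together with $V^2=0$ forcing $\mu_4(v)$ to land in $\mathcal{Z}(B)$ via the Lie-derivation identity on brackets of elements of $B$; (iv) conclude $\Theta_{\rm Lied}(v)\in\mathcal{Z}(\mathscr{D}(\Lambda))$.

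The main obstacle I anticipate is step (iii): showing that the diagonal block $\left[\smallmatrix\delta_4(v)&0\\0&\mu_4(v)\endsmallmatrix\right]$ really lies in $\mathcal{Z}(\mathcal{G})$, i.e. that $\delta_4(v)m=m\mu_4(v)$ and $n\delta_4(v)=\mu_4(v)n$ for all $m\in M,n\in N$. This does not follow formally from Lemma \ref{3.1} alone; it requires genuinely using that $v$ spans a path with $s(p)=e(p)=i$ together with the multiplication rules of the dual extension, so that both sides are independently forced to be zero or to match. I would handle this by testing the Lie derivation identity $\Theta_{\rm Lied}([\left[\smallmatrix 0&0\\0&v\endsmallmatrix\right],\left[\smallmatrix 0&m\\ n&0\endsmallmatrix\right]])=[\Theta_{\rm Lied}(\left[\smallmatrix 0&0\\0&v\endsmallmatrix\right]),\left[\smallmatrix 0&m\\ n&0\endsmallmatrix\right]]+[\left[\smallmatrix 0&0\\0&v\endsmallmatrix\right],\Theta_{\rm Lied}(\left[\smallmatrix 0&m\\ n&0\endsmallmatrix\right])]$ and comparing the four matrix entries, using $V M=0=NV$ to simplify the left side to something involving only $\nu_3$, $\tau_2$, and reading off the required commutation relations from the vanishing of the top-left and bottom-right entries.
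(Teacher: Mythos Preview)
Your plan is correct and will produce a valid proof, but it is more elaborate than what the paper actually does, and you overlook the one observation that collapses the argument.

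The point you miss is that every basis element of $V$ is already of the form $nm$: a nontrivial path $p$ with $s(p)=i$ lies in $M=(1-e_i)\mathscr{D}(\Lambda)e_i$, its reverse $p^{\ast}$ lies in $N=e_i\mathscr{D}(\Lambda)(1-e_i)$, and $p^{\ast}p\in NM\subset B$. With $\Phi_{MN}=0$ this lets the paper apply conditions (1) and (2) of Lemma~\ref{3.1} directly. From (1), $0=\delta_1(0)=\delta_1(mn)=\delta_4(nm)+\tau_2(m)n+m\nu_3(n)$, and since $\tau_2(m)n,\,m\nu_3(n)\in MN=0$, one gets $\delta_4(p^{\ast}p)=0$ outright. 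From (2), $\mu_4(p^{\ast}p)=p^{\ast}\tau_2(p)+\nu_3(p^{\ast})p\in NM$, so $m\,\mu_4(p^{\ast}p)\in M(NM)=(MN)M=0$ and $\mu_4(p^{\ast}p)\,n\in (NM)N=N(MN)=0$. Since $B$ is commutative, the element $\left[\smallmatrix 0&0\\0&\mu_4(p^{\ast}p)\endsmallmatrix\right]$ is then visibly central. (The off-diagonal entries $-m_0v$, $-vn_0$ also vanish, exactly by your structural step (i); the paper leaves this implicit.)

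Your route through conditions (4) and (5) of Lemma~\ref{3.1} also works: once you have $Mv=0=vN$, setting $b=v$ in (4) gives $0=\tau_2(mv)=\tau_2(m)v+m\mu_4(v)-\delta_4(v)m$, hence $\delta_4(v)m=m\mu_4(v)$, and similarly from (5). So your ``main obstacle'' (iii) is in fact immediate from Lemma~\ref{3.1} plus the structural facts, and the bracket-testing computation you sketch at the end is redundant --- it merely re-derives (4) and (5). The trade-off: the paper's route via (1) and (2) yields the stronger conclusion $\delta_4(v)=0$ (not just the centrality relation), and this is exactly what gets used later in the proof of Theorem~\ref{4.3}. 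Two minor slips: in your step (i) the module actions should read $M\cdot V=0$ and $V\cdot N=0$ (you wrote them the wrong way round, though your step (ii) has the correct $m_0v$, $vn_0$); and the claim that $\mu_4(v)\in\mathcal{Z}(B)$ needs no Lie-identity argument --- $B$ is commutative, so $\mathcal{Z}(B)=B$.
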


\begin{proof} It is easy to see that
$$
\mathscr{D}(\Lambda)\simeq \left[
\begin{array}
[c]{cc}%
(1-e_i)\mathscr{D}(\Lambda)(1-
e_i) & (1- e_i)\mathscr{D}(\Lambda) e_i\\
e_i\mathscr{D}(\Lambda)(1-e_i) & e_i\mathscr{D}(\Lambda) e_i
\end{array}
\right].
$$
Then $\Theta_{\rm Lied}$ has the form described in Lemma \ref{3.1}.
The condition (1) of Lemma \ref{3.1} implies that
$\delta_4(p^{\ast}p)=0$. Thus $\Theta_{\rm
Lied}(p^{\ast}p)=\mu_4(p^{\ast}p)$. It follows from condition (2) of
Lemma \ref{3.1} that $\mu_4(p^{\ast}p)=p^{\ast}q+q'p$. This shows
that $0m=m\mu_4(p^{\ast}p)=0$ and $n0=\mu_4(p^{\ast}p)n=0$ for all
$m\in M$ and $n\in N$. Note that $B$ is commutative. Then
$\Theta_{\rm Lied}(p^{\ast}p)\in \mathcal
{Z}(\mathscr{D}(\Lambda))$.
\end{proof}

Now we are in position to prove the main result of this paper.

\begin{theorem}\label{4.3}
Let $\Gamma$ be a quiver without oriented cycles, $\Lambda=K(\Gamma,
\rho)$ and $\mathscr{D}(\Lambda)$ be the dual extension of
$\Lambda$. Then each Lie derivation on $\mathscr{D}(\Lambda)$ is of
the standard form $(\spadesuit)$.
\end{theorem}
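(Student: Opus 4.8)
The plan is to induct on the number of vertices of $\Gamma$, using the generalized-matrix-algebra decomposition of $\mathscr{D}(\Lambda)$ at a source together with Theorem \ref{3.3} and Corollary \ref{3.9}. If $|\Gamma_0|=1$ the algebra is trivial and there is nothing to prove, so assume $|\Gamma_0|\geq 2$ and pick a source $i$ of $\Gamma$. Setting $e=e_i$ and writing $\mathscr{D}(\Lambda)\simeq \mathcal{G}=\left[\smallmatrix A & M\\ N & B\endsmallmatrix\right]$ with $A=(1-e)\mathscr{D}(\Lambda)(1-e)$, $B=e\mathscr{D}(\Lambda)e$, one observes (as already noted in Section \ref{xxsec2}) that since $i$ is a source the pairing $\Phi_{MN}=0$, while $B=e\mathscr{D}(\Lambda)e$ is spanned by the trivial path $e_i$ together with the paths $p^{\ast}q$ with $s(q)=s(p)=i$; in particular $B$ is commutative and $A\simeq \mathscr{D}(\Lambda')$ is again a dual extension of a path algebra of a quiver with one fewer vertex. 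The inductive hypothesis then gives that every Lie derivation of $A$ is standard, and Lemma \ref{4.1} gives $W(A)=A$.

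The difficulty is that Corollary \ref{3.9} does not apply directly: $\Phi_{MN}=0$ but $\Psi_{NM}$ need not vanish, so $\mathcal{G}$ is not a generalized matrix algebra with both pairings zero. So instead I would verify the two conditions of Theorem \ref{3.3} by hand. Starting from a Lie derivation $\Theta_{\rm Lied}$ with data $\delta_1,\delta_4,\tau_2,\nu_3,\mu_1,\mu_4,m_0,n_0$ as in Lemma \ref{3.1}, apply the inductive hypothesis to $\delta_1$ (which by condition (1) of Lemma \ref{3.1} is a Lie derivation of $A$) to write $\delta_1=p_A+l_A$ with $p_A$ a derivation of $A$ and $l_A$ mapping into $\mathcal{Z}(A)$ with $l_A([a,a'])=0$. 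For the $B$-side, since $B$ is commutative and spanned by $e_i$ and elements of the form $p^{\ast}q$, I would use Lemma \ref{4.2}: every element $p^{\ast}p$ with $p\in\mathcal{P}_i$ lies in the center, and $\mu_4$ restricted to the relevant subspace can be absorbed into $l_B$. Concretely, set $p_B$ to be the derivation part of $\mu_4$ and let $l_B=\mu_4-p_B$ map into $\mathcal{Z}(B)=B$; the conditions $l_B([b,b'])=0$ hold trivially because $B$ is commutative. The remaining identities of Theorem \ref{3.3} — namely $l_A(mn)=\delta_4(nm)$, $l_B(nm)=\mu_1(mn)$, $l_A(a)m=m\mu_1(a)$, $nl_A(a)=\mu_1(a)n$, $l_B(b)n=n\delta_4(b)$, $ml_B(b)=\delta_4(b)m$ — I would extract from conditions (1)--(5) of Lemma \ref{3.1} combined with the facts $\Phi_{MN}=0$ (so $mn=0$ for all $m,n$, forcing $\delta_4(nm)=\delta_1(mn)-\tau_2(m)n-m\nu_3(n)$ etc.) and the commutativity of $B$. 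The main obstacle here is checking the compatibility identities $l_A(a)m=m\mu_1(a)$ and $ml_B(b)=\delta_4(b)m$: these require tracing how the central-valued pieces $l_A$ and $\mu_1$ interact through the (possibly non-faithful) bimodule $M$, and this is exactly where the argument must not invoke faithfulness. I expect this to go through by using that $\mu_1$ annihilates commutators (condition (3)) together with Lemma \ref{3.7} and Lemma \ref{3.8}: every element of $A$ that matters — idempotents, commutators, and polynomial expressions with vanishing derivative — lies in the subalgebra $V_A$ of Lemma \ref{3.8}, and since $W(A)=A$ we get $V_A=A$, which is precisely the statement that $l_A(a)m=m\mu_1(a)$ and $nl_A(a)=\mu_1(a)n$ for all $a$.

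Once both conditions of Theorem \ref{3.3} are verified, that theorem yields $\Theta_{\rm Lied}=\delta+h$ with $\delta$ a derivation and $h$ mapping into $\mathcal{Z}(\mathscr{D}(\Lambda))$ with $h([G,G'])=0$, i.e. $\Theta_{\rm Lied}$ is standard, completing the induction. For the uniqueness clause (stated in the Introduction's Theorem), I would argue as in Proposition \ref{3.15}: if $\Theta=\delta+h=\delta'+h'$ are two standard decompositions, then $\delta-\delta'$ is a derivation of $\mathscr{D}(\Lambda)$ with image in the center, and peeling off sources one vertex at a time (each $\delta_1$ being again a derivation with central image) reduces to a derivation of $K$, which must be zero; hence $\delta=\delta'$ and $h=h'$. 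The one genuinely new point relative to the generalized-one-point-extension case of Proposition \ref{3.15} is the presence of the nonzero pairing $\Psi_{NM}$ and the noncommutative interaction it could in principle create on the $A$-side — but because $i$ is chosen to be a source, all the "$B$-to-$A$'' information flows through $N$ and lands in the center, so the obstruction never materializes.
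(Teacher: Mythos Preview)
Your overall strategy---induction on $|\Gamma_0|$, Peirce decomposition at a source $i$, and verification of both conditions of Theorem~\ref{3.3} via $W(A)=A$ (Lemma~\ref{4.1}) and Lemma~\ref{3.8}---is exactly the paper's, and your treatment of condition~(1) is correct: once $\delta_1=p_A+l_A$ by induction, Lemma~\ref{3.8} together with $W(A)=A$ gives $V_A=A$, while $\Phi_{MN}=0$ forces $l_A(mn)=0=\delta_4(nm)$.

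The one point where your write-up is looser than the paper is condition~(2), specifically the identity $l_B(nm)=\mu_1(mn)$. You say ``set $p_B$ to be the derivation part of $\mu_4$'', but since $B$ is commutative this decomposition is far from unique, and the naive choice $p_B=0$, $l_B=\mu_4$ does \emph{not} work: Lemma~\ref{3.1}(2) gives $\mu_4(nm)=n\tau_2(m)+\nu_3(n)m$, which need not vanish, whereas $\mu_1(mn)=\mu_1(0)=0$. Your phrase ``$\mu_4$ restricted to the relevant subspace can be absorbed into $l_B$'' therefore points the wrong way. The paper fixes this by a preliminary reduction \emph{before} invoking Theorem~\ref{3.3}: writing $\mathscr{D}(\Lambda)=V\oplus W$ with $V$ spanned by the nontrivial loops at $i$, Lemma~\ref{4.2} gives $\Theta_{\rm Lied}(V)\subset\mathcal{Z}(\mathscr{D}(\Lambda))$, so one splits $\Theta_{\rm Lied}=\Theta'+\Delta'$ with $\Delta'$ central-valued and $\Theta'(V)=0$. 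For $\Theta'$ one then has $\mu_4(nm)=0$ outright, the choice $l_B=\mu_4$, $p_B=0$ succeeds, and the remaining identities $ml_B(b)=\delta_4(b)m$, $l_B(b)n=n\delta_4(b)$ drop out of Lemma~\ref{3.1}(4),(5) after observing that $mb=km$ when $b=ke_i+v$ with $v\in V$. Your route can be repaired by instead placing $\mu_4|_V$ into $p_B$ and verifying the compatibilities directly, but the paper's reduction is the cleaner execution of the same idea.
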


\begin{proof}
 Let $\Theta_{\rm Lied}$ be a Lie
derivation of $\mathscr{D}(\Lambda)$. Suppose that
$\mathscr{D}(\Lambda)$ as vector space has the decomposition
$\mathscr{D}(\Lambda)=V\oplus W$. Define a linear mapping $\Theta'$
by $\Theta'(V)=0$, $\Theta'(W)=\Theta_{\rm Lied}(W)$  and define a
linear mapping $\Delta'$ by $\Delta'(V)=\Theta_{\rm Lied}(V)$ and
$\Delta'(W)=0$. Clearly, $\Theta_{\rm Lied}=\Theta'+\Delta'$.
Furthermore, it follows from Lemma \ref{4.2} that ${\rm
Im}(\Delta')\subset \mathcal {Z}(\mathscr{D}(\Lambda))$. This shows
that $\Theta'$ is also a Lie derivations on $\mathscr{D}(\Lambda)$.
Clearly, if each Lie derivations of type $\Theta'$ is of the
standard form $(\spadesuit)$, then every Lie derivations of
$\mathscr{D}(\Lambda)$ has the standard form $(\spadesuit)$.

Assume that $i$ is a source in $\Gamma$ and $e_i$ the corresponding
idempotent in $\mathscr{D}(\Lambda)$. Let $\Theta_{\rm Lied}$ be a
Lie derivation of $\mathscr{D}(\Lambda)$ satisfying the condition
$\Theta_{\rm Lied}(V)=0$. Then
$$
\mathscr{D}(\Lambda)\simeq \left[
\begin{array}
[c]{cc}%
(1-e_i)\mathscr{D}(\Lambda)(1-
e_i) & (1- e_i)\mathscr{D}(\Lambda) e_i\\
e_i\mathscr{D}(\Lambda)(1-e_i) & e_i\mathscr{D}(\Lambda) e_i
\end{array}
\right].
$$

Let us first prove that for $\mathscr{D}(\Lambda)$, the condition
(2) of Theorem \ref{3.3} is satisfied. We have from the construction
of $\mathscr{D}(\Lambda)$ that $ e_i\mathscr{D}(\Lambda) e_i$ is an
algebra with a basis $\{p^{\ast}p\mid s(p)=i\}.$ Furthermore, if $p,
q$ are nontrivial, then $(p^{\ast}p)(q^{\ast}q)=0$. Thus the algebra
$ e_i\mathscr{D}(\Lambda) e_i$ is commutative. Let $l_B$ be equal to
$\mu_4$. Then $l_B([b, b'])=0$ for all $b, b'\in
e_i\mathscr{D}(\Lambda) e_i$. Note that $(1-
e_i)\mathscr{D}(\Lambda) e_i \mathscr{D}(\Lambda)(1- e_i)=0$. That
is, $\Phi_{MN}=0$. We conclude that $\mu_1(mn)=0$ for all $m\in M$
and $n\in N$. On the other hand, since $\Theta( p^{\ast} p)=0$ for
all nontrivial path $ p$, we arrive at $\mu_4( p^{\ast} p)=0$ and
hence $\mu_4(nm)=0$. Therefore $l_B(nm)=\mu_1(mn)$.

Let $b=ke_i+v$, where $v\in V$. It follows from the structure of
$M$ and $\mathscr{D}(\Lambda)$ that
$$\tau_2(mb)=k\tau_2(me_i)=k\tau_2(m)=\tau_2(m)ke_i=\tau_2(m)(ke_i+v)=\tau_2(m)b.$$
Similarly, we can obtain that $\nu_3(bn)=b\nu_3(n)$. Then it follows
from conditions (4) and (5) of Lemma \ref{3.1} that
$l_B(b)n=n\delta_4(b)$ and $ml_B(b)=\delta_4(b)m$.

By the definition of dual extension, it is easy to check that
$$
(1- e_i)\mathscr{D}(\Lambda)(1-
e_i)\simeq\mathscr{D}(\Lambda'),
$$ where $\Lambda'= K(\Gamma',
\rho')$, $(\Gamma', \rho')$ is a quiver obtained by removing the
vertex $i$ and the relations starting at $i$. Clearly, $\Gamma'$ has
no oriented cycles. Then Lemma \ref{4.1} implies that
$\mathscr{D}(\Lambda')=W(\mathscr{D}(\Lambda'))$. Thus $\Theta_{\rm
Lied}$ is of the standard form $(\spadesuit)$ if each Lie derivation
on $\mathscr{D}(\Lambda')$ is standard.

Note that $\Gamma$ is a finite quiver. Repeating this process with
finite times, we arrive at the algebra $K$. That is, if each Lie
derivation of $K$ is standard, then so is $\mathscr{D}(\Lambda)$.
This completes the proof.
\end{proof}

\begin{corollary}\label{4.4}
Let $\Theta_{\rm Lied}$ be a Lie derivation of
$\mathscr{D}(\Lambda)$. Then there exists a derivation $D$ of
$\mathscr{D}(\Lambda)$ such that $\Theta_{\rm Lied}(x)=D(x)$ for
all $x=\sum_ik_i p_i\in \Lambda$, where $p_i$ are non-trivial
paths.
\end{corollary}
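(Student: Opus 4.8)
The plan is to bootstrap from the structure theorem just proved. By Theorem \ref{4.3}, the Lie derivation $\Theta_{\rm Lied}$ is standard, so we may fix a decomposition $\Theta_{\rm Lied} = D + \Delta$ in which $D$ is an associative derivation of $\mathscr{D}(\Lambda)$ and $\Delta \colon \mathscr{D}(\Lambda) \longrightarrow \mathcal{Z}(\mathscr{D}(\Lambda))$ is a linear map with $\Delta([a,b]) = 0$ for all $a, b \in \mathscr{D}(\Lambda)$. Since $D$ is linear and $x = \sum_i k_i p_i$ with every $p_i$ a non-trivial path, it suffices to prove that $\Delta(p) = 0$ for every non-trivial path $p$; then $\Theta_{\rm Lied}(p) = D(p)$ for each such $p$, and $\Theta_{\rm Lied}(x) = D(x)$ follows by linearity.

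The mechanism I would use is that every non-trivial path of $\mathscr{D}(\Lambda)$ is a commutator in $\mathscr{D}(\Lambda)$, whence $\Delta(p) = 0$ is immediate. When $s(p) \neq e(p)$ this is already essentially in the proof of Lemma \ref{4.1}: $p\, e_{s(p)} = p$ while $e_{s(p)}\, p = 0$ (left multiplication by $e_v$ keeps only the paths ending at $v$), so $p = [p, e_{s(p)}]$. In particular, for $x = \sum_i k_i p_i \in \Lambda$ all the $p_i$ are non-trivial paths of $\Gamma$, which has no oriented cycles, so $s(p_i) \neq e(p_i)$ and the above already gives $\Delta(p_i) = 0$ for all $i$; this proves the corollary as stated.

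To cover the remaining non-trivial paths of $\mathscr{D}(\Lambda)$ --- those with $s(p) = e(p)$ --- I would invoke the normal form dictated by the relations $\rho \cup \rho^{\ast} \cup \{\alpha\beta^{\ast} \mid \alpha,\beta \in \Gamma_1\}$: an original arrow may never be composed after a starred one, so every non-zero path equals $q^{\ast} r$ for paths $q, r$ of $\Gamma$, and since neither $\Gamma$ nor $\Gamma^{\ast}$ has an oriented cycle, $s(p) = e(p)$ forces $q$ and $r$ to be non-trivial. Then $r q^{\ast}$ has, at the junction of its original and starred blocks, a length-two subword of the shape $\alpha\beta^{\ast}$ with $\alpha, \beta \in \Gamma_1$, which is one of the annihilated relations, so $r q^{\ast} = 0$ and hence $p = q^{\ast} r = [q^{\ast}, r]$. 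The one delicate point --- the main thing to check carefully --- is that this junction word is an admissible path and really is one of the killed relations, i.e. the bookkeeping identity $e(q^{\ast}) = s(p) = s(r)$ that makes the first arrow of $r$ compose with the last arrow of $q^{\ast}$; once that is verified, the claim $\Delta(p) = 0$ follows for all non-trivial paths and the argument is complete by linearity.
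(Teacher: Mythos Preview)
Your proof is correct and follows essentially the same route as the paper: invoke Theorem~\ref{4.3} to write $\Theta_{\rm Lied} = D + \Delta$, then kill $\Delta$ on every non-trivial path by exhibiting it as a commutator (the paper phrases this via the Lie-derivation identity for $\Delta$, but since $\Delta$ annihilates commutators by the very definition of the standard form, your formulation is equivalent and slightly more direct). For the closed-path case your factorization $p = q^{\ast}r$ is in fact more careful than the paper's claim $p = x^{\ast}x$, which tacitly assumes the two halves coincide; as you observe, this case is anyway not needed for $x \in \Lambda$ since $\Gamma$ has no oriented cycles.
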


\begin{proof}
Let $\Theta_{\rm Lied}$ be a Lie derivation of
$\mathscr{D}(\Lambda)$. We have from Theorem \ref{4.3} that
$\Theta_{\rm Lied}$ is of the standard form $(\spadesuit)$. Then
$\Theta_{\rm Lied}=D+\Delta$, where $D$ is a derivation of
$\mathscr{D}(\Lambda)$ and $\Delta(x)\in  \mathcal
{Z}(\mathscr{D}(\Lambda))$ for all $x\in \mathscr{D}(\Lambda)$. Note
that $\Delta$ is also a Lie derivation of $\mathscr{D}(\Lambda)$.
Then for a path $p$ with $s(p)\neq e(p)$, the fact $p=[p, s(p)]$
gives
$$
\Delta(p)=[\Delta(p), s(p)]+[p, \Delta(s(p))].
$$ It
follows from the image of $\Delta$ being in $ \mathcal
{Z}(\mathscr{D}(\Lambda))$ that $\Delta(p)=0$. Moreover, let $p$ be
a nontrivial path with $s(p)=e(p)$. By the construction of
$\mathscr{D}(\Lambda)$, $p$ is of the form $x^{\ast}x$, where $x$ is
a nontrivial path in $\Gamma$. Therefore
$$
\Delta(p)=\Delta(x^{\ast}x)=\Delta([x^{\ast},
x])=[\Delta(x^{\ast}), x]+[x^{\ast}, \Delta(x)]=0.
$$
Then for all $x=\sum_ik_i\overline p_i\in \Lambda$, where $p_i$ are
non-trivial paths, we have $\Theta_{\rm Lied}(x)=D(x).$
\end{proof}

Let us address problem of whether the standard decomposition of
arbitrary Lie derivation of $\mathscr{D}(\Lambda)$ is unique.
Fortunately, the answer is positive. In order to give the answer, we
are forced to characterize the center of $\mathscr{D}(\Lambda)$.

\begin{lemma}\label{4.5}
Let $\Gamma$ be a connected quiver with $|\Gamma_0|\geq 2$. Then
the elements in $\mathcal {Z}(\mathscr{D}(\Lambda))$ are of the
form
$$
k+\sum_{e(p)=s(p), p^2=0} k_pp.
$$
\end{lemma}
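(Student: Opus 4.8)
The plan is to analyse a central element $z\in\mathcal{Z}(\mathscr{D}(\Lambda))$ first by determining which basis paths may occur in it, and then by determining the coefficients of the trivial paths. Throughout I would use that $\mathscr{D}(\Lambda)$ is a path algebra modulo an admissible ideal: it has a $K$-basis consisting of residue classes of paths of the doubled quiver $(\Gamma_0,\Gamma_1\cup\Gamma_1^{\ast})$, its defining ideal lies in the square of the arrow ideal $J$ (every relation involves at least two arrows), and consequently $\mathscr{D}(\Lambda)$ is filtered by the powers $J^n$, with $\mathscr{D}(\Lambda)/J\cong K^{|\Gamma_0|}$ spanned by the images of the $e_i$ and with the arrows forming a $K$-basis of $J/J^2$.

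First I would show that $z$ is supported on closed paths. Since $z$ commutes with every primitive idempotent $e_i$, for $i\neq j$ we get $e_i z e_j=e_i(z e_j)=e_i(e_j z)=(e_i e_j)z=0$, and therefore $z=\sum_{i,j}e_i z e_j=\sum_i e_i z e_i$. As $e_i\mathscr{D}(\Lambda)e_i$ is spanned by the residue classes of the paths $q$ with $s(q)=e(q)=i$, this gives
$$
z=\sum_{i\in\Gamma_0}\lambda_i e_i+\sum_{p}c_p\,p,
$$
where $p$ runs over the nontrivial paths with $s(p)=e(p)$. Because $\Gamma$ has no oriented cycles, such a $p$ can be built neither from arrows of $\Gamma_1$ alone nor from arrows of $\Gamma_1^{\ast}$ alone, so the relations $\{\alpha\beta^{\ast}\}$ force it to have the form $p=u^{\ast}w$ with $u,w$ nontrivial paths of $\Gamma$, whence $p^2=u^{\ast}(wu^{\ast})w=0$ because $wu^{\ast}$ contains a subpath $\alpha\beta^{\ast}$; thus the displayed sum already ranges only over the $p$ with $s(p)=e(p)$ and $p^2=0$.

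Next I would force the $\lambda_i$ to coincide. Passing to $\mathscr{D}(\Lambda)/J^2$ annihilates every nontrivial closed path (these lie in $J^2$), so the image $\overline{z}$ of $z$ equals $\sum_i\lambda_i\overline{e_i}$ and is central in $\mathscr{D}(\Lambda)/J^2$. For an arrow $\gamma$ of the doubled quiver with $s(\gamma)=i$ and $e(\gamma)=j$ one has $\overline{z}\,\overline{\gamma}=\lambda_j\overline{\gamma}$ and $\overline{\gamma}\,\overline{z}=\lambda_i\overline{\gamma}$, and since $\overline{\gamma}\neq 0$ in $J/J^2$ centrality yields $\lambda_i=\lambda_j$. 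As $\Gamma$ --- hence the doubled quiver, viewed as an undirected graph --- is connected and $|\Gamma_0|\geq 2$, all the $\lambda_i$ equal a single scalar $k\in K$, and thus $z=k\cdot 1+\sum_p c_p\,p$, which is the asserted form.

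I do not expect a genuine obstacle here: the delicate points are merely the clean use of the radical filtration in the last step, which relies on admissibility of the relation ideal so that the arrows remain linearly independent in $J/J^2$, and the observation that even if $\rho$ contains inhomogeneous relations --- so that $\mathscr{D}(\Lambda)$ need not be graded by path length --- the filtration by the $J^n$ still suffices for the argument.
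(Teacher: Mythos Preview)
Your argument is correct and follows essentially the same route as the paper: first use commutation with the idempotents $e_i$ to kill all paths with $s(p)\neq e(p)$, then use commutation with an arrow to force the coefficients of the $e_i$ to agree along each edge, and invoke connectedness. Your version is slightly more explicit in two places---you actually verify that every nontrivial closed path in $\mathscr{D}(\Lambda)$ satisfies $p^2=0$ (the paper silently builds this into its initial decomposition of $x$), and you phrase the arrow step via the quotient $\mathscr{D}(\Lambda)/J^2$ rather than by a direct comparison of $\alpha x$ and $x\alpha$---but these are refinements of presentation, not a different method.
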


\begin{proof}
Assume that
$$
x=\sum_{i\in \Gamma_0}k_ie_i+\sum_{s(p)\neq
e(p)}k_pp+\sum_{s(p)=e(p), p^2=0}k_pp\in\mathcal
{Z}(\mathscr{D}(\Lambda)).
$$
Applying the fact $e_tx=xe_t$ yields that
$$
\sum_{t=s(p)\neq e(p)}k_pp=\sum_{t=e(p)\neq s(p)}k_pp.
$$
This implies that for all paths $p$ with $s(p)\neq e(p)$, we have
$k_p=0$ if $s(p)=t$ or $e(p)=t$. Since $t$ is arbitrary, the
coefficients of all paths $p$ with $s(p)\neq e(p)$ are zero.

Let $\alpha$ be an arrow in $\Gamma_1$ with $e(\alpha)=j$ and
$s(\alpha)=t$. In view of $\alpha x=x\alpha$, we know that
$k_j=k_t$. Note that $\Gamma$ is a connected quiver. Thus $k_i=k$
for all $i\in \Gamma_0$, where $k\in K$. This completes the proof.
\end{proof}

\begin{lemma}\label{4.6}
Let $D$ be a derivation of $\mathscr{D}(\Lambda)$. If ${\rm
Im}(D)\subset \mathcal {Z}(\mathscr{D}(\Lambda))$, then $D=0$.
\end{lemma}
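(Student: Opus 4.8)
The plan is to induct on the number of vertices $|\Gamma_0|$, using the realization of $\mathscr{D}(\Lambda)$ as a generalized matrix algebra attached to a source together with the explicit form of derivations of such algebras given in Lemma \ref{3.2}. The base case $|\Gamma_0|=1$ is immediate, since then $\mathscr{D}(\Lambda)\simeq K$ and $K$ has no nonzero derivations.

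For the inductive step, assume $|\Gamma_0|\ge 2$, fix a source $i$ of $\Gamma$, and write $\mathscr{D}(\Lambda)\simeq\mathcal{G}=\left[\smallmatrix A & M\\ N & B \endsmallmatrix\right]$ with $A=(1-e_i)\mathscr{D}(\Lambda)(1-e_i)$, $B=e_i\mathscr{D}(\Lambda)e_i$, $M=(1-e_i)\mathscr{D}(\Lambda)e_i$ and $N=e_i\mathscr{D}(\Lambda)(1-e_i)$, exactly as in the proof of Theorem \ref{4.3}. By Lemma \ref{3.2}, $D$ is encoded by elements $m_0\in M$, $n_0\in N$ and linear maps $\delta_1,\tau_2,\nu_3,\mu_4$. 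Since every element of $\mathcal{Z}(\mathcal{G})$ is diagonal (see the description of $\mathcal{Z}(\mathcal{G})$ recalled in Section \ref{xxsec2}), evaluating $D$ on $\left[\smallmatrix a & 0\\ 0 & 0 \endsmallmatrix\right]$ forces $am_0=0=n_0a$ for all $a\in A$, whence $m_0=n_0=0$ upon taking $a=1_A$; evaluating $D$ on $\left[\smallmatrix 0 & m\\ 0 & 0 \endsmallmatrix\right]$ and on $\left[\smallmatrix 0 & 0\\ n & 0 \endsmallmatrix\right]$ then forces $\tau_2=0$ and $\nu_3=0$. Thus $D$ reduces to $\left[\smallmatrix a & m\\ n & b \endsmallmatrix\right]\longmapsto\left[\smallmatrix \delta_1(a) & 0\\ 0 & \mu_4(b) \endsmallmatrix\right]$, where $\delta_1$ is a derivation of $A$ with ${\rm Im}(\delta_1)\subseteq\mathcal{Z}(A)$ and $\mu_4$ is a derivation of $B$.

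Next I would show $\mu_4=0$. Note that $B=e_i\mathscr{D}(\Lambda)e_i=Ke_i\oplus V$, where $V$ is the span of the nonzero nontrivial paths from $i$ to $i$ in the doubled quiver. Because $i$ is a source (so no $\Gamma$-arrow ends at $i$, and no $\Gamma^{\ast}$-arrow starts at $i$) and because of the relations $\alpha\beta^{\ast}=0$, every such path has the shape $q^{\ast}p$ with $p$ a nontrivial path of $\Gamma$ starting at $i$, $q$ a nontrivial path of $\Gamma$ starting at $i$, and $e(q)=e(p)$; hence $q^{\ast}p$ is the product in $\mathcal{G}$ of $q^{\ast}\in N$ and $p\in M$, so $V$ lies in the linear span of the products $nm$ with $n\in N$, $m\in M$. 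By condition (2) of Lemma \ref{3.2}, combined with $\tau_2=\nu_3=0$, we get $\mu_4(nm)=n\tau_2(m)+\nu_3(n)m=0$; since moreover $\mu_4(e_i)=0$ ($e_i$ is the identity of $B$, and the algebras are $2$-torsion free), it follows that $\mu_4=0$.

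Finally, by the computation in the proof of Theorem \ref{4.3}, $A\simeq\mathscr{D}(\Lambda')$ where $\Lambda'=K(\Gamma',\rho')$ and $\Gamma'$ is obtained from $\Gamma$ by deleting the source $i$; the quiver $\Gamma'$ has fewer vertices and still no oriented cycles, and $\delta_1$ is a derivation of $A$ with image in $\mathcal{Z}(A)$, so the inductive hypothesis gives $\delta_1=0$, hence $D=0$. I expect the only genuinely delicate point to be the $B$-corner: since $B$ is commutative one has $\mathcal{Z}(B)=B$, so the centrality hypothesis does not constrain $\mu_4$ on its own, and the essential ingredient is instead the observation that the radical $V$ of $B$ is a sum of products $nm$ — precisely what condition (2) of Lemma \ref{3.2} annihilates. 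Everything else is routine unwinding of Lemmas \ref{3.1} and \ref{3.2}.
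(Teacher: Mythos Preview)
Your argument is correct, but it follows a genuinely different route from the paper's. The paper does not induct: it first observes that a derivation is in particular a Lie derivation, invokes the computation inside Corollary~\ref{4.4} (which in turn rests on Theorem~\ref{4.3}) to conclude $D(p)=0$ for every nontrivial path $p$, and then uses the explicit description of $\mathcal{Z}(\mathscr{D}(\Lambda))$ from Lemma~\ref{4.5} together with the idempotent relation $D(e_i)=e_iD(e_i)+D(e_i)e_i$ and $D(e_ie_j)=0$ to force $D(e_i)=0$ for each vertex~$i$. Your proof instead reuses the inductive machine already set up for Theorem~\ref{4.3} and Proposition~\ref{3.15}: peel off a source, read off the derivation in the form of Lemma~\ref{3.2}, kill the off-diagonal data from the diagonality of $\mathcal{Z}(\mathcal{G})$, kill $\mu_4$ via the observation that the radical of $B$ equals the span of $NM$ (so condition~(2) of Lemma~\ref{3.2} applies), and hand $\delta_1$ to the inductive hypothesis on $\mathscr{D}(\Lambda')$.

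The trade-off: the paper's argument is short once Theorem~\ref{4.3}, Corollary~\ref{4.4} and Lemma~\ref{4.5} are in hand, but it leans on the main theorem and on the connectedness hypothesis built into Lemma~\ref{4.5}. Your argument is logically independent of Theorem~\ref{4.3} and of Lemma~\ref{4.5}, which makes it more self-contained and, incidentally, insensitive to whether $\Gamma'$ stays connected after removing the source. Your identification of the ``delicate point'' is exactly right: centrality alone says nothing about $\mu_4$ because $B$ is commutative, and it is precisely the relation $\mu_4(nm)=n\tau_2(m)+\nu_3(n)m$ together with $V\subseteq\operatorname{span}(NM)$ that does the work.
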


\begin{proof}
Let $D$ be a derivation of $\mathscr{D}(\Lambda)$ with ${\rm
Im}(D)\subset \mathcal {Z}(\mathscr{D}(\Lambda))$. Clearly, $D$ is
also a Lie derivation of $\mathscr{D}(\Lambda)$. By the proof of
Corollary \ref{4.4}, we have $D(p)=0$ for all nontrivial path $p$.
We now prove $D(e_i)=0$ for all $i\in \Gamma_0$. According to
Corollary \ref{4.5}, we can assume that $D(e_i)=k_i+\sum_{e(p)=s(p),
p^2=0} k_p^ip$. Note that $e_i$ is an idempotent. By the definition
of derivation, it is easy to verify that $k_i=0$ and $k_p^i=0$ for
paths $p$ with $s(p)=i$. Furthermore, suppose there exists some $p$
with nonzero coefficient in $D(e_i)$. Let $s(p)=j\neq i$. Then
$D(e_ie_j)=0$. On the other hand, $D(e_ie_j)=D(e_i)e_j+e_iD(e_j)\neq
0$, which is a contradiction. This implies that $D=0$.
\end{proof}

As a direct consequence of Lemma \ref{4.6} we immediately get

\begin{proposition}
Let $\Theta_{\rm Lied}$ be a Lie derivation of a dual extension
algebra $\mathscr{D}(\Lambda)$. Then the standard decomposition of
$\Theta_{\rm Lied}$ is unique.
\end{proposition}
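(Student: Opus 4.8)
The plan is to obtain this uniqueness statement as an immediate corollary of Lemma \ref{4.6}. Suppose that a Lie derivation $\Theta_{\rm Lied}$ of $\mathscr{D}(\Lambda)$ admits two standard decompositions, say
$$
\Theta_{\rm Lied}=D_1+\Delta_1=D_2+\Delta_2,
$$
where each $D_i$ is an associative derivation of $\mathscr{D}(\Lambda)$ and each $\Delta_i\colon\mathscr{D}(\Lambda)\longrightarrow\mathcal{Z}(\mathscr{D}(\Lambda))$ is a linear mapping annihilating all commutators (so that $(\spadesuit)$ holds). First I would rearrange this equality into $D_1-D_2=\Delta_2-\Delta_1$ and set $D:=D_1-D_2$. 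The left-hand side exhibits $D$ as a difference of two derivations, hence $D$ is itself a derivation of $\mathscr{D}(\Lambda)$; the right-hand side exhibits the same map as a difference of two maps landing in $\mathcal{Z}(\mathscr{D}(\Lambda))$, hence ${\rm Im}(D)\subset\mathcal{Z}(\mathscr{D}(\Lambda))$.

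At this point Lemma \ref{4.6} applies verbatim and forces $D=0$, that is, $D_1=D_2$, and consequently $\Delta_1=\Theta_{\rm Lied}-D_1=\Theta_{\rm Lied}-D_2=\Delta_2$. This is precisely the asserted uniqueness of the standard decomposition, and, together with the existence half supplied by Theorem \ref{4.3}, it completes the description of Lie derivations of dual extension algebras.

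I do not anticipate any real obstacle: all of the substantive work has already been carried out in Corollary \ref{4.4} (which pins down the values of a central-valued Lie derivation on nontrivial paths via the identities $p=[p,s(p)]$ and $p=x^{\ast}x=[x^{\ast},x]$), in Lemma \ref{4.5} (the explicit description of $\mathcal{Z}(\mathscr{D}(\Lambda))$), and in Lemma \ref{4.6} itself. The only points that might deserve an explicit sentence are the two trivial linearity observations that a difference of derivations is a derivation and a difference of center-valued maps is center-valued; beyond that the argument is a one-line reduction.
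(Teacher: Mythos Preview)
Your argument is correct and is exactly the one the paper has in mind: the proposition is stated there simply as ``a direct consequence of Lemma \ref{4.6}'', and your difference-of-decompositions reduction spells out precisely that consequence. Nothing further is needed.
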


\begin{remark}
On one hand, we know that a Lie derivation of dual extension algebra
can be uniquely expressed as the sum of a derivation and a linear
mapping annihilating all commutators with images in the center of
the algebra. On the other hand, the sum of a derivation and a linear
mapping annihilating all commutators with images in the center is
clearly a Lie derivation. In this sense, the forms of Lie derivation
on dual extensions are thoroughly characterized at all.
\end{remark}

Now let us give an example of a Lie derivation which is not a
derivation.

\begin{example}
Let $\Gamma$ be the following quiver
$$
\xymatrix@C=25mm{
\bullet \ar@<0pt>[r]^(0){1}^{\alpha}  &
\bullet
  \ar@<0pt>[r]^(0.3){\beta}^(0){2}^(0.7){3}&\bullet}
$$
with relation $\beta\alpha$ and $\Lambda=K(\Gamma, \rho)$. Let
$\mathscr{D}(\Lambda)$ be the dual extension of the path algebra
$\Lambda$. Define a linear mapping $\Theta_{\rm Lied}$ on
$\mathscr{D}(\Lambda)$ by
\begin{eqnarray*}
  \Theta_{\rm Lied}(e_1)=k_1+\alpha^{\ast}\alpha; & \Theta_{\rm Lied}(e_2)=k_2+\beta^{\ast}\beta; & \Theta_{\rm Lied}(e_3)=k_3;\\
  \Theta_{\rm Lied}(\alpha)=\alpha;  &
  \Theta_{\rm Lied}(\alpha^{\ast})=\alpha^{\ast}; &
  \Theta_{\rm Lied}(\beta)=\beta^{\ast}; \\
  \Theta_{\rm Lied}(\beta^{\ast})=\beta^{\ast}; &
  \Theta_{\rm Lied}(\alpha^{\ast}\alpha)=2\alpha^{\ast}\alpha;&
  \Theta_{\rm Lied}(\beta^{\ast}\beta)=2\beta^{\ast}\beta
\end{eqnarray*}
Then a direct computation shows that $\Theta_{\rm Lied}$ is a Lie
derivation of  $\mathscr{D}(\Lambda)$ but not a derivation.

Moreover, we give the standard decompositions of $\Theta_{\rm Lied}$
here. Define a linear mapping $\Delta$ on $\mathscr{D}(\Lambda)$ by
\begin{eqnarray*}
\Delta(e_1)=k_1+\alpha^{\ast}\alpha, &
\Delta(e_2)=k_2+\beta^{\ast}\beta, &  \Delta(e_3)=k_3
\end{eqnarray*}
and let $D=\Theta_{\rm Lied}-\Delta$, then $\Theta_{\rm
Lied}=D+\Delta$ is the standard decomposition of $\Theta$.
\end{example}

\bigskip

\noindent{\bf Acknowledgement} The first author would like to
express his sincere thanks to Chern Institute of Mathematics of
Nankai University for the hospitality during his visit. He also
acknowledges Prof. Chengming Bai for his kind consideration and warm
help.


\begin{thebibliography}{}

\bibitem[1]{AlaminosBresarVillena} J. Alaminos, M. Bre\v{s}ar and A. R. Villena,
{\em The strong degree of von Neumann algebras and the structure of
Lie and Jordan derivations}, Math. Proc. Camb. Phil. Soc.,
\textbf{137} (2004), 441-463.

\bibitem[2]{AuslanderReitenSmalso} M. Auslander, I. Reiten, S. Smal{\o}, {\em Representation Theory of Artin
Algebras}, Cambridge Studies in Advanced Mathematics, vol.
\textbf{36}, Cambridge University Press, Cambridge (1995).

\bibitem[3]{Benkovic} D. Benkovi\v{c}, {\em Lie derivations on triangular
matrices}, Linear Multilinear Algebra, \textbf{55} (2007),
619-626.

\bibitem[4]{Cheung} W. S. Cheung, {\em Lie derivations of triangular
algebras}, Linear Multilinear Algebra, \textbf{51} (2003), 299-310.

\bibitem[5]{DuWang} Y. Du and Y. Wang, Lie derivations of generalized matrix
algebras, Linear Algebra Appl., \textbf{437} (2012), 2719-2726.

\bibitem[6]{DengXi1} B. M. Deng and C. C. Xi, {\em Quasi-hereditary algebras which are dual extensions of
algebras}, Comm. Algebra, \textbf{22} (1994), 4717-4735.

\bibitem[7]{DengXi2} B. M. Deng and C. C. Xi, {\em Quasi-hereditary algebras which are twisted double incidence
algebras of posets}, Beitr$\ddot{\rm a}$ge Algebra Geom.,
\textbf{36} (1995), 37-71 .

\bibitem[8]{DengXi3} B. M. Deng and C. C. Xi, {\em Ringel duals of quasi-hereditary algebras}, Comm.
Algebra \textbf{24} (1996), 2825-2838.


\bibitem[9]{GuoLi} L. Guo and F. Li, {\em On the Lie algebra of differential operators on a path
algebra}, arXiv:1010.1980

\bibitem[10]{Irving} R. S. Irving, {\em BGG-algebras and the BGG reciprocity
principle}, J. Algebra \textbf{135} (1990), 363-380.

\bibitem[11]{LiWei1} Y. -B. Li and F. Wei, {\em Semi-centralizing maps
of generalized matrix algebras}, Linear Algebra Appl.,
\textbf{436} (2012), 1122-1153.

\bibitem[12]{LiWei2} Y. -B. Li and F. Wei, {\em Jordan derivations and
Lie derivations of path algebras}, arXiv: 1203.4925.

\bibitem[13]{LiWei3} Y. -B. Li and F. Wei, {\em Jordan derivations of dual extensions},
In Preparation.

\bibitem[14]{LiWykWei} Y. -B. Li, L. van Wyk and Feng Wei, {\em Jordan derivations
and antiderivations of generalized matrix algebras}, Operators and
Matrices, In Press.

\bibitem[15]{Martindale} W. S. Martindale, {\rm Lie derivations of primitive rings},
Michigan Math. J., \textbf{11} (1964), 183-187.

\bibitem[16]{Morita} K. Morita, {\em Duality for modules and its applications to the theory
of rings with minimum condition}, Sci. Rep. Tokyo Kyoiku Diagaku
Sect., \textbf{A6} (1958), 83-142.

\bibitem[17]{KoenigXi} S. Koenig and C. C. Xi, {\em Strong symmetry defined by
twisting modules, applied to quasi-hereditary algebras with
triangular decomposition and vanishing radical cube} Comm. Math.
Phys., \textbf{197} (1998), 427-441.

\bibitem[18]{Xi1} C. C. Xi, {\em Quasi-hereditary algebras with a
duality}, J. Reine Angew. Math., \textbf{449}(1994), 201-215.

\bibitem[19]{Xi2} C. C. Xi, {\em Global dimensions of dual extension algebras},
Manuscripta Math., \textbf{88}(1995), 25-31.

\bibitem[20]{Xi3} C. C. Xi, {\em Twisted doubles of algebras I: Deformations and the Jones
index}, CMS Conf. Proc., \textbf{24} (1998), 513-523.

\bibitem[21]{XiaoWei1} Z. -K. Xiao and F. Wei, {\em Commuting mappings of
generalized matrix algebras}, Linear Algebra Appl., \textbf{433}
(2010), 2178-2197.

\bibitem[22]{XiaoWei2} Z. -K. Xiao and F. Wei, {\em Lie tripple derivations of triangular algebras},
Linear Algebra Appl., \textbf{437} (2012), 1234-1249.

\end{thebibliography}
\end{document}